\documentclass[reqno, 15pt]{amsart}
\usepackage{latexsym}
\usepackage{amsmath}
\usepackage{amssymb}
\usepackage{amsthm}
\usepackage{amscd}
\usepackage{graphicx}
\usepackage{xcolor}
\usepackage[colorlinks,citecolor=blue]{hyperref}
\usepackage{tikz}
\usepackage{tikz-cd}
\usepackage{caption} 
\usepackage[a4paper,top=3cm,bottom=3cm,left=3cm,right=3cm]{geometry}
\usepackage{cite}
\usepackage{enumitem}
\usepackage{tabularx}
\usepackage{multirow}
\usepackage{multicol}
\usepackage{longtable}
\usepackage{makecell}

\usepackage{float} 
\linespread{1.2}

\vfuzz2pt 
\hfuzz2pt 
\newtheorem{theorem}{Theorem}[section]

\newtheorem{lemma}[theorem]{Lemma}
\newtheorem{proposition}[theorem]{Proposition}

\theoremstyle{definition}

\theoremstyle{remark}
\newtheorem{remark}[theorem]{Remark}

\numberwithin{equation}{section}

\DeclareMathOperator{\oh}{\mathcal{O}}

\begin{document}
	
\title[]{Uniform bundles on generalised Grassmannians}

\author{Xinyi Fang}
\address{Xinyi Fang, Department of Mathematics, Shanghai Normal University, Shanghai, 200234, PR China}
\thanks{1.Xinyi Fang is supported by innovation Action Plan (Basic research projects) of Science and Technology Commission of Shanghai Municipality (Grant No.21JC1401900)}
\email{xinyif@shnu.edu.cn}
\author{Duo Li*}
\address{Duo Li, Sun-Yat Sen University, School of Mathematics(Zhuhai), Zhuhai, Guangdong, 519082, PR China}
\thanks{2.Duo Li is supported by National Natural Science Foundation of China (Grant No. 12001547)}
\email{liduo5@mail.sysu.edu.cn}
\author{Yanjie Li}
\address{Yanjie Li, AMSS, Chinese Academy of Sciences, 55 ZhongGuanCun East Road, Beijing, 100190, China and  University of Chinese Academy of Sciences, Beijing, China}
\address{Sun-Yat Sen University, School of Mathematics(Zhuhai), China}
\email{liyj293@mail2.sysu.edu.cn}

\begin{abstract}
Let $E$ be a uniform bundle on an arbitrary generalised Grassmannian $X$ defined over $\mathbb{C}$. We show that  if the rank of $E$ is at most $e.d.(\mathrm{VMRT})$, then $E$  necessarily splits. For some generalised Grassmannians, we prove that the upper bounds $e.d.(\mathrm{VMRT})$ are optimal and classify all unsplit uniform bundles of minimal ranks.  Under some special assumptions, we show that  morphisms to some generalised flag varieties must be constant, which partially answered a conjecture of Kumar.
\end{abstract}

\maketitle
\textbf{Keywords}: uniform bundle; generalised Grassmannian; splitting of vector bundles.\\

\textbf{MSC}: 14M15; 14M17; 14J60.
\setcounter{tocdepth}{1}
\tableofcontents


\section{Introduction}
In this article, we assume all the varieties are defined over $\mathbb{C}$. It is well known that every vector bundle on a rational curve $\mathbb{P}^1$ splits as a direct sum of line bundles. Hartshorne conjectures that any vector bundle of rank $2$ on $\mathbb{P}^n$ ($n\geq 7$) splits. This conjecture is still widely open. On the other hand, the splitting of uniform bundles on $\mathbb{P}^n$ of lower ranks is completely understood.  On a projective space $\mathbb{P}^n$, we consider a vector bundle $E$ and its restriction  $E|_{L}$ to any line $L\subseteq \mathbb{P}^n$. If the splitting type of   $E|_{L}$ is independent of the choice of $L$, we call $E$ a uniform bundle. Every uniform bundle on $\mathbb{P}^n$ whose  rank is  smaller than $n$ splits (cf. \cite[Main Theorem]{sato1976uniform}). The concept of uniform bundles can be generalised to the projective varieties swept by lines. Especially,  on a Grassmannian $Gr(k,n+1)(k\leq n+1-k)$, any uniform bundle whose  rank is smaller than $k$ splits (cf. \cite[Theorem 1]{guyot1985caracterisation}). On some other generalised Grassmannians, there are similar results, see \cite[Theorem 3.1]{munoz2012uniform}, \cite[Theorem 5.6]{Pan2015Tri} and \cite[Theorem 1.5]{du2021vector}.\\

On a projective space $\mathbb{P}^n$, up to twisting by a line bundle, an unsplit uniform bundle of rank $n$ must be isomorphic to $T_{\mathbb{P}^n}$ or $\Omega_{\mathbb{P}^n}$ (cf. \cite[Main Theorem]{EHS}). In \cite[Theorem 1]{guyot1985caracterisation},  Guyot classifies unsplit uniform bundles of rank $k$ on $Gr(k,n+1)(k\leq n+1-k)$. In \cite[Theorem 3.1]{munoz2012uniform}, the authors reprove Guyot's result and obtain a similar classification for the orthogonal isotropic Grassmannian $OG(n,2n+1)$. In \cite[Theorem 3.1]{fritzsche1983linear},  Fritzsche classifies unsplit uniform bundles of rank $2$ on $\mathbb{Q}^3$. The classification of unsplit uniform bundles of minimal ranks for other generalised Grassmanninas, even for quadrics $\mathbb{Q}^n(n\ge 5)$, is rarely known. \\

We aim to generalise the above results to arbitrary generalised Grassmannians and the 
 main theorem of our article is as follows.
\begin{theorem}\label{main1}
         $(1) $  Let $X$ be an arbitrary generalised Grassmannian $G/P$ and let $E$ be a uniform bundle of rank $r$ on $X$. If $r$ is at most $e.d.(\mathrm{VMRT})$,  the vector bundle $E$ splits as a direct sum of line bundles. \\
         
         $(2)$  Let $X$ be a generalised Grassmannian in Table \ref{Table 2}. There exists an
  unsplit uniform bundle $E_{\lambda}$ on $X$ of rank $e.d.(\mathrm{VMRT})+1$, where  $E_{\lambda}$ is the
    irreducible homogeneous bundle corresponding to some highest weight $\lambda$.\\

     $(3)$ Let $X$ be a generalised Grassmannian in Table \ref{Table 3}.
        Up to twisting by a line bundle,  every unsplit uniform bundle $E$ on $X$ of rank  $e.d.(\mathrm{VMRT})+1$  is an irreducible homogeneous bundle in Table \ref{Table 3} or its dual.

\end{theorem}
For the convenience of reading, we list some necessary information for Theorem \ref{main1}, $(1)$ in Table \ref{table} (we will explain the meaning of $a(X)$ and $e.d.(\mathrm{VMRT})$ in Section $3$).

\begin{small}
\begin{center}
   \begin{longtable}{|c|c|c|c|c|}
 \hline
 Type& $X$ & VMRT & $a(X)$& $e.d.$(VMRT)\\
\hline
 \multirow{3}{*}{$A_n(n\geq 1)$} & $\mathbb{P}^n$  & $\mathbb{P}^{n-1}$ & $n-1$& $n-1$\\
\cline{2-5}
 & $Gr(k,n+1)$  & \multirow{2}{*}{$\mathbb{P}^{k-1}\times \mathbb{P}^{n-k}$} & \multirow{2}{*}{$\min(k-1,n-k)$}& \multirow{2}{*}{$\min(k-1,n-k)$}\\
 & $(2\leq k \leq n-1)$ &&&\\
\hline
\multirow{5}{*}{$B_n(n\geq 2)$}& $\mathbb{Q}^{2n-1}$ & $\mathbb{Q}^{2n-3}$ & $2n-3$ & $2n-3$\\
\cline{2-5}
& $OG(k,2n+1)$ & \multirow{2}{*}{$\mathbb{P}^{k-1}\times \mathbb{Q}^{2(n-k)-1}$}&$\min$& $\min$\\
& $(2\leq k \leq n-2)$&& $(k-1,2n-2k-1)$ & $(k-1,2n-2k-1)$\\
\cline{2-5}
& $OG(n-1,2n+1)$ & $\mathbb{P}^{n-2}\times \mathbb{P}^1$ & $1$ & $1$\\
\cline{2-5}
& $OG(n,2n+1)$& $Gr(n-1,n+1)$ & $2n-2$ & $n$\\
\hline
\multirow{3}{*}{$C_n(n\geq 3)$}& $SG(k,2n)$ & \multirow{2}{*}{$\mathbb{P}_{\mathbb{P}^{k-1}}(\oh(-2)\oplus \oh(-1)^{2n-2k})$}& \multirow{2}{*}{$\min(k-1,2n-2k)$} & \multirow{2}{*}{$\min(k-1,2n-2k)$} \\
& $(2\leq k \leq n-1)$ &&&\\
\cline{2-5}
& $LG(n,2n)$ & $\mathbb{P}^{n-1}$ & $n-1$ & $n-1$\\
\hline
\multirow{5}{*}{$D_n(n\geq4)$} & $\mathbb{Q}^{2n-2}$ & $\mathbb{Q}^{2n-4}$ & $2n-4$ & $2n-5$\\ 
\cline{2-5}
& $OG(k,2n)$ & \multirow{2}{*}{$\mathbb{P}^{k-1}\times \mathbb{Q}^{2(n-k)-2}$}& $\min$ & $\min$\\ 
& $(2\leq k \leq n-3)$ & & $(k-1,2n-2k-2)$ & $(k-1,2n-2k-3)$\\
\cline{2-5}
& $OG(n-2,2n)$ & $\mathbb{P}^{n-3}\times \mathbb{P}^{1}\times \mathbb{P}^1$ & $1$ & $1$\\
\cline{2-5}
& $OG(n,2n)$ & \multirow{2}{*}{$Gr(n-2,n)$} & \multirow{2}{*}{$2n-4$} & \multirow{2}{*}{$n-1$}\\
& $(\cong OG(n-1,2n))$ & & &\\
\hline
\multirow{4}{*}{$E_6$} & $E_6/P_{1}(\cong E_6/P_6)$ & $OG(5,10)$& $10$ & $7$\\
\cline{2-5}
& $E_6/P_2$ & $Gr(3,6)$ & $9$ & $5$\\
\cline{2-5}
& $E_6/P_3(\cong E_6/P_5)$ & $Gr(2,5)\times \mathbb{P}^1$ & $1$ & $1$\\
\cline{2-5}
& $E_6/P_4$ & $\mathbb{P}^2\times \mathbb{P}^2 \times \mathbb{P}^1$ & $1$ & $1$ \\
\hline
\multirow{7}{*}{$E_7$} & $E_7/P_1$ & $OG(6,12)$ & $15$ & $9$\\
\cline{2-5}
& $E_7/P_2$ & $Gr(3,7)$ & $12$ & $6$\\
\cline{2-5}
& $E_7/P_3$ & $Gr(2,6)\times \mathbb{P}^1$ & $1$ & $1$\\
\cline{2-5}
& $E_7/P_4$ & $\mathbb{P}^3\times \mathbb{P}^2 \times \mathbb{P}^1$ & $1$ & $1$\\
\cline{2-5}
& $E_7/P_5$ & $Gr(2,5)\times \mathbb{P}^2$ & $2$ & $2$\\
\cline{2-5}
& $E_7/P_6$ & $OG(5,10)\times \mathbb{P}^1$ & $1$ & $1$\\
\cline{2-5}
& $E_7/P_7$ & $E_6/P_6$ &16 & $12$\\
\hline
\multirow{8}{*}{$E_8$}& $E_8/P_1$ & $OG(7,14)$ & $21$ & $11$\\
\cline{2-5}
& $E_8/P_2$ &$Gr(3,8)$ & $15$ & $7$\\
\cline{2-5}
& $E_8/P_3$ & $Gr(2,7)\times \mathbb{P}^1$ & $1$ & $1$\\
\cline{2-5}
& $E_8/P_4$ & $\mathbb{P}^4\times \mathbb{P}^2 \times \mathbb{P}^1$ & $1$ & $1$\\
\cline{2-5}
& $E_8/P_5$ & $Gr(2,5)\times \mathbb{P}^3$ & $3$ & $3$\\
\cline{2-5}
& $E_8/P_6$ & $OG(5,10)\times \mathbb{P}^2$ & $2$ & $2$\\
\cline{2-5}
& $E_8/P_7$ & $E_6/P_6\times \mathbb{P}^1$ & $1$ & $1$\\
\cline{2-5}
& $E_8/P_8$ & $E_7/P_7$ &$27$ & $19$\\
\hline
\multirow{4}{*}{$F_4$} & $F_4/P_1$ & $LG(3,6)$ &$6$ & $5$\\
\cline{2-5}
& $F_4/P_2$ & $\mathbb{P}^2\times \mathbb{P}^1$ & $1$ & $1$\\
\cline{2-5}
& $F_4/P_3$ & $\mathbb{Q}^4$-bundle over $\mathbb{P}^1$ & $1$ & $1$\\
\cline{2-5}
& \multirow{2}{*}{$F_4/P_4$} &  hyperplane section of the & \multirow{2}{*}{$9$} & \multirow{2}{*}{6}\\
&&10-dim Spinor variety&  &\\
\hline
\multirow{2}{*}{$G_2$} & $G_2/P_1$ &$\mathbb{Q}^3$ & $3$ & $3$\\
\cline{2-5}
& $G_2/P_2$ & $\mathbb{P}^1$ & $1$ & $1$\\
\hline
\caption{}
   \label{table}\\
\end{longtable}
\end{center}
\end{small}

For the optimal upper bounds, we refer to Table \ref{Table 2}.
\begin{center}
                   \centering
\begin{longtable}{|c|c|c|c|c|c|c|c|}
  \hline
  $X$&$A_n/P_1 $ &\makecell{$A_n/P_k$\\$(2\leq k\leq \frac{n+1}{2})$}&\makecell{$B_n/P_1$\\$(n=2,3)$}&\makecell{$B_n/P_k$\\$(2\leq k \leq \frac{2n}{3})$}&\makecell{$B_n/P_{n-2}$\\$(n\ge 6)$}&\makecell{$B_n/P_{n-1}$\\$(n\ge 3)$}&$C_n/P_n$\\
\hline
$E$&	$E_{\lambda_n}$&	$E_{\lambda_1}$&	$E_{\lambda_n}$&	$E_{\lambda_1}$&	$E_{\lambda_n}$	&	$E_{\lambda_n}$&	$E_{\lambda_1}$\\

\hline
$X$ &$D_4/P_1$&\makecell{$D_n/P_k$\\$(2\leq k \leq \frac{2n-2}{3})$} &\makecell{$D_n/P_{n-3}$\\$(n\ge 7)$} &\makecell{$D_n/P_{n-2}$\\$(n\ge 4)$} &\makecell{$D_n/P_{n-1}$\\$(\cong D_n/P_n)$}&\makecell{$E_n/P_2$\\$(n=6,7,8)$}&\makecell{$E_n/P_3$\\$(n=6,7,8)$}\\
\hline
$E$ &	$E_{\lambda_4}$&	$E_{\lambda_1}$	&	$E_{\lambda_n}$	&	$E_{\lambda_n}$&	$E_{\lambda_1}$&	$E_{\lambda_1}$&	$E_{\lambda_1}$\\
\hline

$X$ &\makecell{$E_n/P_4$\\$(n=6,7,8)$}& \makecell{$E_n/P_5$\\$(n= 7,8)$} & \makecell{$E_n/P_6$\\ $(n=7,8)$} & $E_8/P_7$ & $F_4/P_1$ & $F_4/P_2$ & \makecell{$G_2/P_1$\\ $(\cong \mathbb{Q}^5)$}\\
\hline
$E$ &	$E_{\lambda_2}$&	$E_{\lambda_n}$	&	$E_{\lambda_n}$	&	$E_{\lambda_8}$ &	$E_{\lambda_4}$ & $E_{\lambda_1}$ & Spinor bundle\\
\hline
$X$ & $G_2/P_2$\\
\cline{1-2}
$E$ & $E_{\lambda_1}$\\
\cline{1-2} 
\caption{}
    \label{Table 2}
\end{longtable}
\end{center}

For the classification of unsplit uniform bundles of minimal ranks, we refer to Table \ref{Table 3}. 

\begin{center}
\begin{longtable}{|c|c|}
        \hline
        $X$ & $E$\\
        \hline
        $\mathbb{P}^n$ & $E_{\lambda_n}(\cong T_{\mathbb{P}^n}(-1))$\\
        \hline
        $A_n/P_k(2\leq k\le \frac{n+1}{2})$ & 
        $E_{\lambda_1}$\\
        \hline $\mathbb{Q}^3,~\mathbb{Q}^5(\cong G_2/P_1),~\mathbb{Q}^6$ & 
        Spinor bundle\\
        \hline
        $B_n/P_k(2\leq k < \frac{2n}{3}),~D_n/P_k(2\leq k <\frac{2n-2}{3})$ & $E_{\lambda_1}$\\
        \hline
        $B_n/P_{n-1}(n>3),~B_n/P_{n-2}(n>6)$ & $E_{\lambda_n}$\\
        \hline
         $B_n/P_n(\cong D_{n+1}/P_{n+1})
$ & $E_{\lambda_1}$\\
        \hline
         $B_3/P_2$ & $E_{\lambda_1},~E_{\lambda_3}$\\
        \hline
        $B_6/P_4$&$E_{\lambda_1},~E_{\lambda_6}$\\
        \hline
        
        $D_n/P_{n-2}(n>4)$ & $E_{\lambda_{n-1}},~E_{\lambda_n}$\\
        \hline
        $D_n/P_{n-3}(n>7)$ & $E_{\lambda_n}$\\
        \hline
        $D_4/P_2$ & $E_{\lambda_1},~E_{\lambda_{3}},~E_{\lambda_4}$\\
        \hline
       $ D_7/P_4$ & $E_{\lambda_1},~E_{\lambda_7}$\\
       \hline $E_n/P_2(n=6,7,8),~E_n/P_3(n=6,7,8)$ & $E_{\lambda_1}$\\
       \hline
       $E_n/P_4(n=6,7,8)$ &
       $E_{\lambda_2}$\\
       \hline
       $E_n/P_5(n=7,8),~E_n/P_6(n=7,8),~E_n/P_7(n=8)$ & $E_{\lambda_n}$\\
       \hline
       $F_4/P_2$ & $E_{\lambda_1}$\\
       \hline
        \caption{}
    \label{Table 3}
    \end{longtable}
\end{center}

We believe the thresholds for uniform bundles to be splitting on most generalised Grassmannians are well known to experts. Here we list these thresholds in a unified table. According to our limited knowledge,  the thresholds for $E_8/P_8$ and $F_4/P_4$ here have not appeared in the existed literature.\\

In \cite{kumar2023nonexistence}, Kumar proposes the following conjecture:\\

\textbf{Conjecture:} Let $X,$ $X'$ be two generalised flag varieties. Assume that $X$ is different from $\mathbb{P}^{2n}$ and $\mathrm{minss~ rank~} X$ is bigger than $\mathrm{maxss~ rank~} X'$, then there are only constant morphisms from $X$ to $X'$ (for the definitions of $\mathrm{minss~ rank~} X$ and $\mathrm{maxss~ rank~} X'$, we refer to \cite{kumar2023nonexistence}).\\

We verify this conjecture for cases under some additional assumptions in Theorem \ref{no map to B,C,D}, where the Dynkin diagram of $X'$  is of type $B,$ $C$ or  $D$ .\\

  Let $p:\mathcal{U}\rightarrow X$ be the universal family of lines on a generalised Grassmannian $X$.  There are three key ingredients in our article: \\
    
        $(1)$ We give a new description of the Chow ring of a generalised flag variety whose Dynkin diagram is of type $B$, $C$ or $D$. This description enables us to classify unsplit uniform bundles on some generalised Grassmannians by using similar methods as in \cite{guyot1985caracterisation}.\\
        
        $(2)$ A given uniform bundle $E$ on $X$ determines a morphism $f_x$ from $p^{-1}(x)$  to some flag variety for any  $x\in X$. Once all $f_x$ are constant morphisms, one can prove $E$ splits. \\
        
         $(3)$   For any vector bundle $V_1,$ $V_2$ on $\mathcal{U}$, if $V_1|_{p^{-1}(x)}$ is simple and is isomorphic to $V_2|_{p^{-1}(x)}$ for any $x\in X$, then $V_1$ is isomorphic to $V_2\otimes p^*L$ for some line bundle $L$ on $X$.  Then we try to classify some quotient bundle $F$ of $p^*(E) $ and show that $E$ is isomorphic to $p_*(F).$\\

    The organization of this article and the highlights of each sections are as follows: \\
    
    In Section $2,$ we give a new description of the Chow rings of some generalised flag varieties and apply it to give a partially affirmative answer to Kumar's conjecture; \\

 In Section $3,$ we prove various criteria for a uniform bundle to be splitting. Especially, we show that if the  rank of an unsplit uniform bundle $E$ on a generalised Grassmannian $X$ is $e.d.(\mathrm{VMRT})+1$, then the splitting type of $E$ is $(\underbrace{1,\dots,1}_{rk(E)-l},\underbrace{0,\dots,0}_{l})$;\\
 
 In Section $4,$  we classify unsplit uniform bundles of minimal ranks. There is a unified proof for the classification when the VMRT of $X$ is a Grassmannian. When the VMRT of $X$ is a product of several factors,  as the Chow rings of some generalised Grassmannians are quite involved, it seems difficult to apply Guyot's methods directly. But   by considering the sub-diagram embeddings, we can reduce the classification to some known cases. The classification for $\mathbb{Q}^5$ needs detailed and robust calculations.\\
    
In this article, we keep the assumptions and keep using the notations as follows.\\

\textbf{Notations:} All varieties are assumed to be smooth,  projective and defined over $\mathbb C.$ For $X$, we denote by $\rho(X)$ the Picard number of $X.$\\

When we consider a rational homogeneous variety $G/P$, we always assume that $G$ is a simple linear  algebraic group and $P$ is a parabolic subgroup of $G$. We use the Bourbaki labeling for the Dynkin diagrams of simple Lie algebras as follows:
\setlength{\unitlength}{0.4mm}
\begin{center}
\begin{picture}(280,0)(0,120)
\put(10,100){\circle{4}} \put(30,100){\circle{4}}
\put(60,100){\circle{4}} \put(80,100){\circle{4}}
\put(12,100){\line(1,0){16}} \put(40,100){\circle*{2}}
\put(45,100){\circle*{2}} \put(50,100){\circle*{2}}
 \put(62,100){\line(1,0){16}}
 \put(100,100){\circle{4}}
 \put(82,100){\line(1,0){16}}
\put(-10,100){\makebox(0,0)[cc]{$A_n:$}}
\put(7,110){$_1$}
\put(27,110){$_2$}
\put(51,110){$_{n-2}$}
\put(71,110){$_{n-1}$}
\put(97,110){$_{n}$}

\put(210,100){\circle{4}} \put(230,100){\circle{4}}
\put(260,100){\circle{4}} \put(280,100){\circle{4}}
\put(212,100){\line(1,0){16}} \put(240,100){\circle*{2}}
\put(245,100){\circle*{2}} \put(250,100){\circle*{2}}
 \put(262,100){\line(1,0){16}}
 \put(300,100){\circle{4}}
 \put(281,102){\line(1,0){18}}
 \put(281,98){\line(1,0){18}}
 \put(285,103){\line(3,-1){9}}
 \put(285,97){\line(3,1){9}}
\put(190,100){\makebox(0,0)[cc]{$B_n:$}}
\put(207,110){$_1$}
\put(227,110){$_2$}
\put(251,110){$_{n-2}$}
\put(271,110){$_{n-1}$}
\put(297,110){$_{n}$}
 \end{picture}
\end{center}
\vspace{.3cm}

\begin{center}
\begin{picture}(280,20)(0,120)
\put(10,100){\circle{4}} \put(12,100){\line(1,0){16}}
\put(30,100){\circle{4}} \put(40,100){\circle*{2}}
\put(45,100){\circle*{2}} \put(50,100){\circle*{2}}
 \put(60,100){\circle{4}}
\put(62,100){\line(1,0){16}} \put(80,100){\circle{4}}
\put(82,100){\line(3,1){16}} \put(100,94){\circle{4}} \put(82,100){\line(3,-1){16}} \put(100,106){\circle{4}}
\put(-10,100){\makebox(0,0)[cc]{$D_n:$}}
\put(7,110){$_1$}
\put(27,110){$_2$}
\put(51,110){$_{n-3}$}
\put(71,110){$_{n-2}$}
\put(105,106){$_{n-1}$}
\put(105,94){$_{n}$}

\put(210,100){\circle{4}} \put(230,100){\circle{4}}
\put(260,100){\circle{4}} \put(280,100){\circle{4}}
\put(212,100){\line(1,0){16}} \put(240,100){\circle*{2}}
\put(245,100){\circle*{2}} \put(250,100){\circle*{2}}
 \put(262,100){\line(1,0){16}}
 \put(300,100){\circle{4}}
 \put(281,102){\line(1,0){18}}
 \put(281,98){\line(1,0){18}}
 \put(285,100){\line(3,-1){9}}
 \put(285,100){\line(3,1){9}}
\put(190,100){\makebox(0,0)[cc]{$C_n:$}}
\put(207,110){$_1$}
\put(227,110){$_2$}
\put(251,110){$_{n-2}$}
\put(271,110){$_{n-1}$}
\put(297,110){$_{n}$}
 \end{picture}
\end{center}
\vspace{.3cm}

\begin{center}
\begin{picture}(280,20)(0,120)
\put(10,100){\circle{4}} \put(12,100){\line(1,0){16}}
\put(30,100){\circle{4}} \put(32,100){\line(1,0){16}}
\put(50,100){\circle{4}} \put(52,100){\line(1,0){16}}
\put(70,100){\circle{4}} \put(72,100){\line(1,0){16}}
\put(90,100){\circle{4}} \put(50,102){\line(0,1){11}}
\put(50,115){\circle{4}} \put(-10,100){\makebox(0,0)[cc]{$E_6:$}}
\put(7,90){$_1$}
\put(27,90){$_3$}
\put(47,90){$_4$}
\put(67,90){$_5$}
\put(88,90){$_6$}
\put(55,115){$_2$}

\put(210,100){\circle{4}} \put(212,100){\line(1,0){16}}
\put(230,100){\circle{4}} \put(231,102){\line(1,0){18}} \put(231,98){\line(1,0){18}}
\put(250,100){\circle{4}} \put(252,100){\line(1,0){16}}
\put(270,100){\circle{4}}
 \put(190,100){\makebox(0,0)[cc]{$F_4:$}}
 \put(235,103){\line(3,-1){9}}
 \put(235,97){\line(3,1){9}}
\put(207,90){$_1$}
\put(227,90){$_2$}
\put(247,90){$_3$}
\put(267,90){$_4$}
 \end{picture}
\end{center}
\vspace{.3cm}

\begin{center}
\begin{picture}(280,20)(0,120)
\put(10,100){\circle{4}} \put(12,100){\line(1,0){16}}
\put(30,100){\circle{4}} \put(32,100){\line(1,0){16}}
\put(50,100){\circle{4}} \put(52,100){\line(1,0){16}}
\put(70,100){\circle{4}} \put(72,100){\line(1,0){16}}
\put(90,100){\circle{4}} \put(92,100){\line(1,0){16}}
\put(110,100){\circle{4}} \put(50,102){\line(0,1){11}}
\put(50,115){\circle{4}} \put(-10,100){\makebox(0,0)[cc]{$E_7:$}}
\put(7,90){$_1$}
\put(27,90){$_3$}
\put(47,90){$_4$}
\put(67,90){$_5$}
\put(87,90){$_6$}
\put(107,90){$_7$}
\put(54,115){$_2$}

\put(210,100){\circle{4}} \put(230,100){\circle{4}}
\put(211,102){\line(1,0){18}}
\put(212,100){\line(1,0){16}}
\put(211,98){\line(1,0){18}}
\put(190,100){\makebox(0,0)[cc]{$G_2:$}}
\put(207,110){$_1$}
\put(227,110){$_2$}
 \put(215,100){\line(3,-1){9}}
 \put(215,100){\line(3,1){9}}
 \end{picture}
\end{center}
\vspace{.3cm}

\begin{center}
\begin{picture}(280,20)(0,120)
\put(10,100){\circle{4}} \put(12,100){\line(1,0){16}}
\put(30,100){\circle{4}} \put(32,100){\line(1,0){16}}
\put(50,100){\circle{4}} \put(52,100){\line(1,0){16}}
\put(70,100){\circle{4}} \put(72,100){\line(1,0){16}}
\put(90,100){\circle{4}} \put(92,100){\line(1,0){16}}
\put(110,100){\circle{4}} \put(112,100){\line(1,0){16}}
\put(130,100){\circle{4}} \put(50,102){\line(0,1){11}}
\put(50,115){\circle{4}} \put(-10,100){\makebox(0,0)[cc]{$E_8:$}}
\put(7,90){$_1$}
\put(27,90){$_3$}
\put(47,90){$_4$}
\put(67,90){$_5$}
\put(87,90){$_6$}
\put(107,90){$_7$}
\put(127,90){$_8$}
\put(54,115){$_2$}
 \end{picture}
\end{center}
\vspace{2cm}

Let $\mathcal{D}(G)$ be the Dynkin diagram of $G$ and let $\mathrm{D}(G)$ be the set of nodes in the Dynkin diagram $\mathcal{D}(G)$. Let $I$ be a subset of $\mathrm{D}(G)$.
Every parabolic subgroup  of $G$ is uniquely determined by marking a subset of nodes in the Dynkin diagram of $G$. We denote by $P_I$ the parabolic subgroup determined by marking $I$. For example, $P_k$ is the maximal parabolic subgroup corresponding to the $k$-th node in $\mathcal{D}(G).$ When it is necessary, we will make the type of the Dynkin diagram of $G$ explicit. For example, we denote by $B_n/P_I$ the variety $G/P_I$ where $\mathcal{D}(G)$ is of type $B_n$. When $P$ is a maximal parabolic subgroup of $G$, we call the rational variety $G/P$ a generalised Grassmannian.\\

If $X$ is a generalised Grassmannian, we fix a family of rational curves of minimal degree (which is a family of lines) on $X$ and denote it by 
    \[
    \begin{tikzcd}
        \mathcal{U}\arrow[r,"q"]\arrow[d,"p"]& \mathcal{M}\\
        X,
    \end{tikzcd}
    \]
    where $\mathcal{M}$ is the parameter space  and $\mathcal{U}$ is the universal family. For any $x\in X$, the fiber $p^{-1}(x)$ is the VMRT of $X$ which parameterizes the rational curves of minimal degree passing through $x.$\\
    
We say a vector bundle splits when it can be decomposed as a direct sum of line bundles. We say a vector bundle unsplits if it can not be decomposed as a direct sum of line bundles.

\section{Chow rings of some generalised flag varieties and Kumar's conjecture}

 We begin with  analogous descriptions of the cohomology rings of the generalised flag varieties whose Dynkin diagrams are of types $B,C,D$ as in  \cite[Theorem 3.2]{guyot1985caracterisation}. These descriptions are also essentially used in Section $4.$ For positive integers $d_1,\dots,d_s$($d_1<\cdots<d_s\leq n$), we assume 
$$\mathbb{Q}[X_1,\dots,X_{d_1};\dots;X_{d_{s-1}+1},\dots,X_{d_s};]$$ to be the ring of polynomials  symmetrical in $X_{d_{l-1}+1},\dots,X_{d_{l}}$ for each $l$ $(1\leq l\leq s)$. Let $\Sigma_u(X_1,X_2,\dots,X_v)$ be the complete homogeneous symmetric polynomial of degree $u$ in $v$ variables. Let $\sigma_u(X_1,X_2,\dots,X_v)$ be the elementary symmetric polynomial.
\begin{lemma}\label{q-cohomology of Bn/P}
    We have the following isomorphisms:
    $$H^{\bullet}(B_n/P_{d_1,\dots,d_s},\mathbb{Q})\cong H^{\bullet}(C_n/P_{d_1,\dots,d_s},\mathbb{Q})\cong \mathbb{Q}[X_1,\dots,X_{d_1};\dots;X_{d_{s-1}+1},\dots,X_{d_s};]/I,$$
    where $I$ is the ideal generated by the set of polynomials $\Sigma_i(X_1^2,\dots,X_{d_s}^2)$ $(n+1-d_{s}\leq i \leq n)$.
\end{lemma}
\begin{proof}
    Let us firstly recall some facts in the proof of \cite[Theorem 3.2]{guyot1985caracterisation}. In the Chow ring $$A^{\bullet}(A_{n-1}/P_{d_1,\dots,d_s})\cong \frac{\mathbb{Z}[X_1,\dots,X_{d_1};X_{d_1+1},\dots,X_{d_2};\dots;X_{d_s+1},\dots,X_n;]}{(\sigma_i(X_1,\dots,X_n)_{1\leq i \leq n})},$$
    there are relations (see \cite[Page 53, Proof of Theorem 3.2]{guyot1985caracterisation}):
    \begin{align}
        &\sigma_j(X_{d_s+1},\dots,X_n)=(-1)^j\Sigma_j(X_1,\dots,X_{d_s}),~ 1\leq j \leq n-d_s, \label{sigma to Sigma}\\
            & \Sigma_i(X_{d_s+1},\dots,X_n)=0,~n-d_s<i\leq n. \label{Sigma vanish}
    \end{align}
    Therefore the polynomials $\Sigma_i(X_{d_s+1},\dots,X_n)(n-d_s<i\leq n)$ are in the ideal $(\sigma_i(X_1,\dots,X_n)_{1\leq i \leq n})$. There is  an isomorphism
    $$A^{\bullet}(A_{n-1}/P_{d_1,\dots,d_s})\cong \frac{\mathbb{Z}[X_1,\dots,X_{d_1};X_{d_1+1},\dots,X_{d_2};\dots;X_{d_{s-1}+1},\dots,X_{d_s};]}{(\Sigma_i(X_1,\dots,X_{d_s})_{n-d_s+1\leq i \leq n})}.$$
    We are going to use similar relations as (\ref{sigma to Sigma}) and (\ref{Sigma vanish}) in the presentation of the cohomology ring of $B_n/P_{d_1,\dots,d_s}$ and $C_n/P_{d_1,\dots,d_s}$.\\

    For convenience, we now introduce some notations as follows:
    \begin{equation*}
        \begin{aligned}
            &R_1:=\mathbb{Q}[X_1,\dots,X_{d_1};\dots;X_{d_{s-1}+1},\dots,X_{d_s};],\\
            &R_2:=\mathbb{Q}[X_1^2,\dots,X_{d_1}^2;\dots;X_{d_{s-1}+1}^2,\dots,X_{d_s}^2;],\\
            &T_1:=\mathbb{Q}[X_1,\dots,X_{d_1};\dots;X_{d_{s-1}+1},\dots,X_{d_s};X_{d_s+1}^2,\dots ,X_{n}^2;],\\
            &T_2:=\mathbb{Q}[X_1^2,\dots,X_{d_1}^2;\dots;X_{d_{s-1}+1}^2,\dots,X_{d_s}^2;X_{d_s+1}^2,\dots ,X_{n}^2;].
        \end{aligned}
    \end{equation*}
    Let $I_m(m=1,2)$ be the ideal in $R_m$ generated by $\Sigma_i(X_1^2,\dots,X_{d_s}^2)$  $(n+1-d_s\le i\leq n)$. Let $J_k(k=1,2)$ be the ideal in $T_k$ generated by $\sigma_j(X_1^2,\dots,X_n^2)$  $(1\le j\le n)$.
    By \cite[Corollary 3.3, Table 3]{munoz2020splitting}, we have
    $H^{\bullet}(B_n/P_{d_1,\dots,d_s},\mathbb{Q})\cong H^{\bullet}(C_n/P_{d_1,\dots,d_s},\mathbb{Q})\cong T_1/J_1$.  Now we aim to construct an isomorphism $i_1:R_1/I_1\rightarrow T_1/J_1$.\\

    Let $\tau_2: R_2\hookrightarrow T_2$ be the inclusion. Since the polynomials $\Sigma_i(X_1^2,\dots,X_{d_s}^2)$  $(n+1-d_s\le i\leq n)$ are in the ideal $J_2$ as explained in (\ref{Sigma vanish}), the morphism $\tau_2$ maps $I_2$ into $J_2$.
   So according to  \cite[Theorem 3.2]{guyot1985caracterisation}, there is  an isomorphism $i_2:R_2/I_2\cong T_2/J_2$ induced by $\tau_2$. Then $I_1$ is also contained in $J_1$ and we have a morphism $i_1:R_1/I_1\rightarrow T_1/J_1$ induced by the inclusion $\tau_1: R_1\hookrightarrow T_1$, which makes the following diagram 
    commute
    \[
    \begin{tikzcd}
        R_2/I_2\arrow[r,"i_2"]\arrow[d]& T_2/J_2\arrow[d]\\
        R_1/I_1\arrow[r,"i_1"]&T_1/J_1
    \end{tikzcd}
    \]
    Note that $i_2$ is surjective. For any polynomial $f$ that is symmetrical in variables $X_{d_s+1}^2,\dots,X_n^2$, the class $[f]$ in $T_1/J_1$ is contained in the image of $i_1$. So $i_1$ is surjective.\\
    
   
We now construct a section of $i_1$ and hence show that $i_1$ is injective.
   For any $l$, let $f_l$ be an element in $R_2$ such that the class $[f_l] $ is $i_2^{-1}([\sigma_l(X_{d_s+1}^2,\dots,X_n^2)])$.
   Since the elementary symmetric polynomials are algebraically independent, there exists a morphism $\phi:T_2\rightarrow R_2$ satisfying that for any $v(1\leq v\leq s)$ and $u$,
    $\phi(\sigma_u(X_{d_{v-1}+1}^2,\dots,X_{d_v}^2))$ is $\sigma_u(X_{d_{v-1}+1}^2,\dots,X_{d_v}^2)$ and $ \phi(\sigma_l(X_{d_s+1}^2,\dots,X_n^2))$ is $f_l.$ Note that for any element $f\in T_2,$ the class $[\phi(f)]$ is $i_2^{-1}([f])$. If $f$ is in the ideal $J_2$, the class $[\phi(f)]$ is zero, which means that $\phi$ maps $J_2$ into $I_2.$ Especially, the element $\phi(\sigma_j(X_1^2,\dots,X_n^2))$  $(1\le j\le n)$ is a combination of $\Sigma_i(X_1^2,\dots,X_{d_s}^2)$  $(n+1-d_s\le i\leq n)$ with coefficients in $R_2.$  Then there exists a morphism $\Phi: T_1\rightarrow R_1$ satisfying $\Phi\circ \tau_1=id_{R_1}$ and $\Phi|_{T_1}=\phi$. So $\Phi$ maps $J_1$ into $I_1$. Hence $\Phi$ induces a morphism $\varphi: T_1/J_1\rightarrow R_1/I_1$ which is a section of $i_1$.
\end{proof}
Let  $\eta_{d_s}$ be the polynomial $X_{d_s+1}\dots X_n$ for any integer $d_s\le n$.
We assume 
$$\mathbb{Q}[X_1,\dots,X_{d_1};\dots;X_{d_{s-1}+1},\dots,X_{d_s};\eta_{d_s}]$$
 to be the subring of $\mathbb{Q}[X_1,\dots,X_n]$ generated by $\eta_{d_s}$ and  the polynomials symmetrical in $X_{d_{l-1}+1},\dots$
 $,X_{d_{l}}(1\leq l\leq s)$. We now give a  description of the cohomology rings of the generalised Grassmannians whose Dynkin diagrams are of type $D$.
\begin{lemma}\label{q-cohomology of Dn/P}
    For $d_s$$(\ne n-1)$, we have an isomorphism
    $$H^{\bullet}(D_n/P_{d_1,\dots,d_s},\mathbb{Q})\cong \frac{\mathbb{Q}[X_1,\dots,X_{d_1};\dots;X_{d_{s-1}+1},\dots,X_{d_s};\eta_{d_s}]}{(\Sigma_i(X_1^2,\dots,X_{d_s}^2)_{n+1-d_{s}\leq i \leq n}, ~\eta_{d_s}^2-(-1)^{n-d_s}\Sigma_{n-d_s}(X_1^2,\dots,X_{d_s}^2),~X_1X_2\cdots X_n)}.$$
\end{lemma}
\begin{proof}
       We introduce some notations as follows:
    \begin{equation*}
        \begin{aligned}
            &R_1:=\mathbb{Q}[X_1,\dots,X_{d_1};\dots;X_{d_{s-1}+1},\dots,X_{d_s};\eta_{d_s}],\\
             &I_1:=(\Sigma_i(X_1^2,\dots,X_{d_s}^2)_{n+1-d_{s}\leq i \leq n}, ~\eta_{d_s}^2-(-1)^{n-d_s}\Sigma_{n-d_s}(X_1^2,\dots,X_{d_s}^2),~X_1X_2\cdots X_n),\\
            &R_2:=\mathbb{Q}[X_1^2,\dots,X_{d_1}^2;\dots;X_{d_{s-1}+1}^2,\dots,X_{d_s}^2;],\\
            &I_2:=(\Sigma_i(X_1^2,\dots,X_{d_s}^2)_{n+1-d_{s}\leq i \leq n}),\\
            &T_1:=\mathbb{Q}[X_1,\dots,X_{d_1};\dots;X_{d_{s-1}+1},\dots,X_{d_s};X_{d_s+1}^2,\dots ,X_{n}^2;\eta_{d_s}],
            \\&J_1:=(\sigma_i(X_1^2,\dots,X_n^2)_{1\leq i\leq n},~X_1X_2\dots X_n),\\
            &T_2:=\mathbb{Q}[X_1^2,\dots,X_{d_1}^2;\dots;X_{d_{s-1}+1}^2,\dots,X_{d_s}^2;X_{d_s+1}^2,\dots ,X_{n}^2;],\\&J_2:=(\sigma_i(X_1^2,\dots,X_n^2)_{1\leq i\leq n}),
        \end{aligned}
    \end{equation*}
    where $I_m(m=1,2)$ is an ideal in $R_m$ and  $J_k(k=1,2)$ is an ideal in $T_k$. By \cite[Corollary 3.3, Table 3]{munoz2020splitting}, we have
    $H^{\bullet}(D_n/P_{d_1,\dots,d_s},\mathbb{Q})\cong T_1/J_1$.  Now we aim to construct an isomorphism $i_1:R_1/I_1\rightarrow T_1/J_1$.\\

    As in Lemma \ref{q-cohomology of Bn/P},   by \cite[Theorem 3.2]{guyot1985caracterisation}, there is an isomorphism $i_2:R_2/I_2\cong T_2/J_2$ induced by the inclusion $R_2\hookrightarrow T_2$. Note that in  the relation (\ref{sigma to Sigma}), $\sigma_{n-d_s}(X_{d_s+1}^2,\dots,X_n^2)$ is the polynomial $\eta_{d_s}^2$. We have
    $\eta_{d_s}^2-(-1)^{n-d_s}\Sigma_{n-d_s}(X_1^2,\dots,X_{d_s}^2)$ $(=\sigma_{n-d_s}(X_{d_s+1}^2,\dots,X_n^2)-(-1)^{n-d_s}\Sigma_{n-d_s}(X_1^2,\dots,X_{d_s}^2))$ is contained in  $J_2$.
   So $I_1$ is contained in $J_1$ and we have a morphism $i_1:R_1/I_1\rightarrow T_1/J_1$ induced by the inclusion $R_1\hookrightarrow T_1$.  By the same arguments as  in Lemma \ref{q-cohomology of Bn/P},  the morphism $i_1$ is surjective. \\

   We now construct a section of $i_1$. In Lemma  \ref{q-cohomology of Bn/P}, there exists a morphism  $\phi:T_2\rightarrow R_2$ such that $\phi$ maps $\sigma_l(X_{d_s+1}^2,\dots,X_n^2)$ to $(-1)^l\Sigma_l(X_1^2,\dots,X_{d_s}^2)$ for any $l$ ($1\leq l \leq n-d_s $). In the ring  $T_1$, there exists a relation $\eta_{d_s}^2=X_{d_s+1}^2\cdots X_{n}^2$. We denote the quotient map by $\pi:R_1\rightarrow R_1/I_1$ and the inclusion maps by $\delta: R_2\rightarrow R_1$ and $\tau_1:R_1 \rightarrow T_1$. Then in $R_1/I_1$, the class $[\pi \circ\delta\circ\phi(X_{d_s+1}^2\cdots X_{n}^2)]$ is $[\eta_{d_s}^2]$. So there exists a morphism $\Phi: T_1\rightarrow R_1/I_1$ satisfying $\Phi \circ \tau_1 = \pi$ and the following diagram commutes:

    \begin{center}
        \begin{tikzcd}
        T_2 \arrow{d}\arrow{r}{\phi} &R_2\arrow{d}{\pi\circ \delta}\\
        T_1 \arrow{r}{\Phi} & R_1/I_1 
    \end{tikzcd}
    \end{center} Note that $X_1\cdots X_n$ is $X_1\cdots X_{d_s}\cdot\eta_{d_s} $ in $T_1$. The class $[\Phi(X_1\dots X_n)]$ vanishes. So $\Phi$ maps $J_1$ to the zero ideal in $R_1/I_1$ and hence $\Phi$ induces a section of $i_1.$
\end{proof}

In the remaining part of this section, we always assume that  $X$ is a Fano manifold  with $\rho(X)=1$. Let $f$ be a morphism $f:X\longrightarrow Y$. We begin with a simple observation.

\begin{proposition}\label{bigger dimension constant}
   If $\dim (X)$ is greater than $\dim  (Y)$, the morphism $f$ is constant.
\end{proposition}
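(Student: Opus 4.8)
The plan is to argue by contradiction, the whole point being that $\rho(X)=1$ collapses the numerical geometry of $X$ to a single ray: modulo numerical equivalence $N^1(X)_{\mathbb R}\cong\mathbb R$, so the nef cone is the closed half-line generated by the ample generator, and consequently every nonzero nef line bundle on $X$ is automatically ample, hence of strictly positive degree on every irreducible curve. I will turn the morphism $f$ into a nonzero nef class and then exhibit a curve on which that class has degree zero.

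Assume $f$ is non-constant and put $Z:=f(X)\subseteq Y$, an irreducible projective variety with $d:=\dim Z\ge 1$ (here I use that all varieties in play are projective, so that $Y$ carries an ample line bundle $A$). Set $L:=f^{*}A$. Since pullback preserves nefness, $L$ is nef. To see that $L$ is not numerically trivial, I pick an irreducible curve $C'\subset X$ that $f$ maps onto a curve of $Z$; this exists because $Z$ is positive-dimensional and $f\colon X\to Z$ is surjective. The projection formula gives $L\cdot C'=\deg\!\big(A|_{f(C')}\big)\cdot\deg(C'\to f(C'))>0$, so $L\not\equiv 0$. Because $\rho(X)=1$, a nonzero nef class is ample, and therefore $L\cdot C>0$ for every irreducible curve $C\subset X$.

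Next I bring in the dimension hypothesis. As $\dim X>\dim Y\ge d=\dim Z$, the surjective morphism $f\colon X\to Z$ has every fiber of dimension at least $\dim X-d>0$; in particular a nonempty fiber contains an irreducible curve $C$, which $f$ contracts to a point. The projection formula then yields $L\cdot C=f^{*}A\cdot C=A\cdot f_{*}C=0$, since $f_{*}C=0$ for a contracted curve. This contradicts the positivity $L\cdot C>0$ obtained above, so $f$ must be constant.

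Since the statement is elementary (it is billed as a simple observation), there is no serious obstacle; the only points deserving care are the two existence claims — a curve $C'$ dominating a curve in $Z$ and a curve $C$ inside a positive-dimensional fiber — together with the verification that $L=f^{*}A$ is genuinely nonzero in $N^1(X)$. All three are standard and I would simply record them. I emphasize that no homogeneity of $Y$ is used: the argument works for any projective target $Y$ with $\dim Y<\dim X$, and the Fano hypothesis on $X$ is irrelevant here, only projectivity and $\rho(X)=1$ being needed.
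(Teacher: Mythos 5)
Your proof is correct and is essentially the paper's argument in expanded form: the paper simply notes that $\rho(X)=1$ forces any non-constant $f$ to be finite onto its image (i.e., $f^*A$ is ample so no curve is contracted), contradicting $\dim(X)>\dim(Y)$, which is exactly the content of your nef-class and contracted-fiber-curve computation. Your write-up just makes explicit the two curve-existence claims and the projection-formula steps that the paper leaves implicit.
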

\begin{proof}
    Since $\rho(X)$ is $1$, if $f$ is non-constant, $f$ will be a finite morphism onto its image.  This is a contradiction to the inequality $\dim(X)>\dim(Y)$.
\end{proof}
When the target variety $Y$ is a (partial) flag variety, for some $X$ of relatively lower dimension, one can also prove that $f$ is constant. By the same arguments as in \cite[Theorem 1]{kumar2023nonexistence} and \cite[Theorem 3.1]{fang2023morphisms}, we have the following proposition.
\begin{proposition}\label{no map fr Q to A/P}
    $(1)$  If $\dim(X)$ is at least $2$ and $Y$ is a generalised complete flag variety $G/B$, the morphism $f$ is a constant map.\\
    
     $(2)$  If $\dim(X)$ is at least  $m$ $(m\ge 2)$ and $Y$ is $A_n/P_I$ where $I$ is $\{1,2,\dots,n-m+2\}$ or $\{m-1,m,\dots,n\}$, the morphism $f$ is a constant map. 
    
\end{proposition}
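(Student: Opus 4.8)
The plan is to handle both parts by a single mechanism: pull the tautological flag on the target back to $X$, use that $\rho(X)=1$ so that $\mathrm{Pic}(X)=\mathbb{Z}H$ for an ample generator $H$ (hence every line bundle pulled back from $Y$ is an integer multiple of $H$), and then feed the triviality of the ambient bundle $\mathcal{O}^{\oplus}$ together with global generation into the Chow ring of $X$. Throughout I will use that, since $\rho(X)=1$, a non-constant $f$ is finite onto its image (as in the proof of Proposition \ref{bigger dimension constant}), so that to prove $f$ constant it suffices to show that $f^{*}$ annihilates $\mathrm{Pic}(Y)$.

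For part $(1)$ I would first reduce to the full flag variety of type $A$. Choosing a faithful representation $V$ of $G$, a Borel of $G$ is cut out by a Borel of $GL(V)$, giving an injective morphism $\iota\colon G/B\hookrightarrow \mathrm{Fl}(V)$; it then suffices to prove that the composite $g=\iota\circ f\colon X\to \mathrm{Fl}(V)$ is constant, since an injective $\iota$ forces $f(X)$ to be a single point once $g$ is. Thus assume $Y$ is the full flag variety $A_{N}/B$ of $\mathbb{C}^{N+1}$, with tautological filtration $0\subset S_1\subset\cdots\subset S_N\subset \mathcal{O}^{N+1}$ and line-bundle subquotients $Q_1,\dots,Q_{N+1}$ (where $Q_{N+1}=\mathcal{O}^{N+1}/S_N$). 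Writing $f^{*}Q_i=\lambda_i H$ with $\lambda_i\in\mathbb{Z}$, the Whitney relation applied to the trivial bundle gives $\prod_{i=1}^{N+1}(1+\lambda_i H)=1$ in $A^{*}(X)$. Comparing degrees $1$ and $2$ (legitimate because $H^{2}\neq 0$ once $\dim X\geq 2$, as $H^{2}\cdot H^{\dim X-2}=H^{\dim X}>0$) yields $\sum_i\lambda_i=0$ and $\sum_{i<j}\lambda_i\lambda_j=0$, whence $\sum_i\lambda_i^{2}=(\sum_i\lambda_i)^{2}-2\sum_{i<j}\lambda_i\lambda_j=0$ and therefore every $\lambda_i=0$. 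Then $f^{*}$ kills an ample class and $f$ is constant.

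For part $(2)$, here $Y=A_n/P_I$ with $I=\{m-1,\dots,n\}$ parametrises flags $V_{m-1}\subset\cdots\subset V_n\subset\mathbb{C}^{n+1}$, and I would peel off the smallest member. The projection $p\colon Y\to Gr(m-1,n+1)$ remembering $V_{m-1}$ has fibre the full flag variety of the quotient $\mathbb{C}^{n+1}/V_{m-1}$, a type-$A$ full flag variety. If $p\circ f$ is constant, then $f$ factors through one such full flag variety and is constant by part $(1)$, since $\dim X\geq m\geq 2$. It remains to exclude a non-constant $p\circ f$. In that case the pulled-back quotient $W:=f^{*}(\mathcal{O}^{n+1}/S_{m-1})$ is globally generated, $f^{*}S_{m-1}^{\vee}$ is globally generated, and the identity $c(f^{*}S_{m-1})\cdot c(W)=1$ forces $c(f^{*}S_{m-1})=\prod_j(1+\lambda_jH)^{-1}$ to be a polynomial in $H$, with $c_k(f^{*}S_{m-1})=(-1)^{k}h_k(\lambda)H^{k}$. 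Global generation of $f^{*}S_{m-1}^{\vee}$ gives $h_k(\lambda)\geq 0$ for $k<m$, while $\mathrm{rk}\,f^{*}S_{m-1}=m-1<m\leq\dim X$ forces $c_k(f^{*}S_{m-1})=0$, that is $h_k(\lambda)=0$, for $m\leq k\leq \dim X$; the aim is to deduce that all $\lambda_j$ and hence all the data vanish, which gives $f^{*}\mathrm{Pic}(Y)=0$ and constancy.

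The main obstacle is precisely this last deduction, i.e. controlling the rank-$(m-1)$ subbundle $S_{m-1}$. In contrast with the full-flag case of part $(1)$, the formal Chern roots of $f^{*}S_{m-1}$ need not be real, so the clean sum-of-squares trick is unavailable, and one cannot directly conclude $\lambda_j\geq 0$ for every quotient (only the top quotient $\mathcal{O}^{n+1}/S_n$ is visibly nef). The crux is to combine the global generation of both $f^{*}S_{m-1}^{\vee}$ and $W$ with the vanishing $h_k(\lambda)=0$ for $m\leq k\leq \dim X$ to force each line bundle $f^{*}(S_j/S_{j-1})$ to be trivial; this is exactly where the hypothesis $\dim X\geq m$ enters, since it is what makes the degree-$m$ Chern relation a genuine equation in $A^{m}(X)$, and it is the step where I would follow the positivity argument of \cite{fang2023morphisms}. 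Once all $\lambda_j=0$, the bundle $W$ has trivial determinant and is globally generated, hence trivial, so $f^{*}$ annihilates an ample class, contradicting the non-constancy of $p\circ f$ and completing the proof.
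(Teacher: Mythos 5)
Your part (1) is correct and complete, and it is in substance the argument of \cite[Theorem 1]{kumar2023nonexistence} that the paper invokes: where Kumar uses the positive-definite Weyl-invariant quadratic form on $H^2(G/B,\mathbb{R})$, you reduce to type $A$ by embedding $G/B$ into $\mathrm{Fl}(V)$ for a faithful representation $V$ (legitimate, since $G\cap B'$ is a closed solvable subgroup containing $B$, hence equals $B$), and then the identity $\sum_i\lambda_i^2=\bigl(\sum_i\lambda_i\bigr)^2-2\sum_{i<j}\lambda_i\lambda_j=0$ finishes the proof exactly as in the cited source. That part I would accept as written.

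Part (2), however, has a genuine gap, and you flag it yourself: the deduction ``$h_k(\lambda)=0$ for $m\le k\le\dim X$, $h_k(\lambda)\ge 0$ for $k<m$, plus global generation of the quotient bundles, imply every $\lambda_j=0$'' is not proved in your proposal; it is deferred to ``the positivity argument of \cite{fang2023morphisms}''. But that deduction \emph{is} the mathematical content of part (2). Everything you do carry out — the Whitney relation $c(f^*S_{m-1})\cdot c(W)=1$, the identification $c_k(f^*S_{m-1})=(-1)^kh_k(\lambda)H^k$, the vanishing in degrees $m\le k\le \dim X$ from $\mathrm{rk}\,S_{m-1}=m-1$, the nonnegativity from nefness — is routine bookkeeping, and the paper's own proof consists precisely of the assertion that the remaining step goes through as in \cite[Theorem 3.1]{fang2023morphisms}; reproducing the setup while citing that same step leaves the heart unproved. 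Moreover the step is not trivially fillable from the constraints you list: for $m$ odd, $h_m(\lambda)=0$ together with $h_k(\lambda)\ge 0$ for $k<m$ does \emph{not} force $\lambda=0$ (e.g.\ $\lambda=(a,-a)$ gives $h_3=0$, $h_1=0$, $h_2=a^2>0$), so in the boundary case $\dim X=m$ with $m$ odd (e.g.\ $X=\mathbb{P}^3$, $m=3$) one must genuinely exploit the ordered flag structure — nefness of every quotient $f^*(\mathcal{O}^{n+1}/S_j)$, i.e.\ nonnegativity of the elementary symmetric functions of each suffix $(\lambda_{j+1},\dots,\lambda_{n+1})$ — and run a case analysis excluding negative entries. (When $m$ is even, or when $\dim X>m$, one can short-circuit this: then $h_{2\lceil m/2\rceil}(\lambda)=0$, and even-degree complete homogeneous symmetric polynomials are positive definite by Hunter's theorem, so $\lambda=0$ at once; only the odd boundary case carries the real difficulty.) Until that argument is written out, part (2) of your proposal is a framework, not a proof.
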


We now prove a similar result as  Proposition \ref{no map fr Q to A/P} in the following theorem, which gives a partially affirmative answer to Kumar's conjecture in \cite{kumar2023nonexistence}. 

\begin{theorem}\label{no map to B,C,D}
 Let $Y$ be $G/P_I$ where $\mathcal{D}(G)$ is of type $B_n,C_n$ or $D_n$ and $I$ is $\{1,2,\dots,n-m+1\}(m\geq 1)$. If $\mathcal{D}(G)$ is of type $D_n$, we further assume  $m\ne 2$. Then if $\dim(X)$ is at least $2m$, the morphism $f$ is constant.
\end{theorem}
\begin{proof}
   
  Firstly,  in  Lemma \ref{q-cohomology of Bn/P} and Lemma \ref{q-cohomology of Dn/P},    the elements $X_i(1\leq i\leq n-m+1)$ generate the group $H^2(Y, \mathbb{Q})$. The morphism $f$ induces a ring homomorphsim
    $f^{*}:H^{\bullet}(Y,\mathbb{Q})\rightarrow H^{\bullet}(X,\mathbb{Q}).$
    Let $H$ be an  ample generator of $\mathrm{Pic}(X)$. Assume that  $f^*X_i$ is $ a_iH(1\leq i \leq n-m+1)$ where each $a_i$ is a  rational number. Note that the polynomial $\Sigma_m(X_1^2,\dots,X_{n-m+1}^2)$ is  in the defining ideal of $H^{\bullet}(Y,\mathbb{Q})$.  So   we have $\Sigma_m(a_1^2,\dots,a_{n-m+1}^2)H^{2m}=0$. If $\dim(X)$ is at least $2m$, we deduce $\Sigma_m(a_1^2,\dots,a_{n-m+1}^2)=0$. So each $a_i$ vanishes, which implies that $f$ is constant.
\end{proof}

\section{Uniform bundles on a generalised Grassmannian}

For a generalised Grassmannian $G/P$ where $G$ is a simple Lie group, we fix a family of lines on it. A vector bundle $E$ of rank $r$ on $G/P$ is uniform if the restriction of $E$ to every line $L$ is isomorphic to $\oh_L(a_1)\oplus\cdots\oplus\oh_L(a_r)$ with  $a_1\geq \cdots \geq a_r$ and $(a_1,\dots,a_r)$ is  independent of $L$. The $r$-tuple $(a_1,\dots,a_r)$ is called the splitting type of $E$. \\

We prove a slightly stronger result of \cite[Corollary 3.2]{MR4553615}. 
\begin{lemma}\label{tri on G/P}
    Let $X$ be a projective smooth and  rationally connected variety. Assume that a vector bundle $F$ on $X$ is globally generated and its first Chern class $c_1(F)$ vanishes. Then $F$ is trivial. 
    \end{lemma}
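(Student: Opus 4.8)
The plan is to use global generation to realise $F$ as the pullback of the universal quotient bundle on a Grassmannian, and then to combine $c_1(F)=0$ with rational connectedness to force the classifying morphism to be constant. First I would set $V:=H^0(X,F)$, which is finite-dimensional since $X$ is projective. As $F$ is globally generated, the evaluation map $V\otimes \mathcal{O}_X \to F$ is surjective; because $F$ has constant rank $r$, this surjection defines a morphism $\varphi\colon X\to \mathrm{Gr}(r,V)$ to the Grassmannian of rank-$r$ quotients of $V$, with $F\cong \varphi^{*}Q$, where $Q$ is the universal quotient bundle. Recalling that $\det Q=\mathcal{O}_{\mathrm{Gr}}(1)$ is the ample generator of $\mathrm{Pic}(\mathrm{Gr}(r,V))$, taking determinants of $F\cong\varphi^{*}Q$ yields $\varphi^{*}\mathcal{O}_{\mathrm{Gr}}(1)\cong \det F$, a line bundle whose first Chern class is $c_1(F)=0$.

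Next I would test $\varphi$ against rational curves. For any non-constant $g\colon \mathbb{P}^1\to X$ one has $\deg g^{*}\det F = c_1(F)\cdot g_{*}[\mathbb{P}^1]=0$, so $\deg (\varphi\circ g)^{*}\mathcal{O}_{\mathrm{Gr}}(1)=0$. Since $\mathcal{O}_{\mathrm{Gr}}(1)$ is ample, a morphism from $\mathbb{P}^1$ along which it pulls back to degree $0$ must be constant; hence $\varphi\circ g$ is constant. In other words, $\varphi$ contracts every rational curve in $X$.

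Finally, I would upgrade ``contracts every rational curve'' to ``is constant'' using rational connectedness. If $\varphi$ were non-constant, its general fibres would have dimension strictly less than $\dim X$, so one could choose general points $x,y\in X$ with $\varphi(x)\neq\varphi(y)$; since $X$ is rationally connected over $\mathbb{C}$, there is a rational curve joining $x$ and $y$, and $\varphi$ restricted to it would be non-constant, contradicting the previous step. Therefore $\varphi$ is constant, say with value corresponding to a quotient $V\twoheadrightarrow W$ with $\dim W=r$, and then $F\cong\varphi^{*}Q\cong W\otimes\mathcal{O}_X\cong\mathcal{O}_X^{\oplus r}$ is trivial.

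I expect the only delicate point to be this last step, namely the deduction that a morphism contracting all rational curves on a rationally connected variety is globally constant; it relies on the fact that over $\mathbb{C}$ two general points of a smooth projective rationally connected variety can be joined by a single rational curve. Everything preceding it is formal once $F$ is written as $\varphi^{*}Q$, and each hypothesis of the lemma is used exactly once: global generation produces $\varphi$, the vanishing of $c_1(F)$ makes $\varphi$ contract rational curves, and rational connectedness turns this into constancy of $\varphi$.
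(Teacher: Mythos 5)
Your proof is correct, but it globalizes differently from the paper. The paper's proof restricts $F$ itself to an arbitrary rational curve $\gamma:\mathbb{P}^1\to X$, notes exactly as you do that global generation plus $c_1=0$ forces $\gamma^*F\cong\mathcal{O}_{\mathbb{P}^1}^{\oplus r}$, and then concludes by citing the theorem of Biswas--dos Santos (\cite[Theorem 1.1]{biswas2009vector}) that a vector bundle on a rationally connected variety which is trivial on every rational curve is trivial. You instead use global generation a second time, and more substantively: the evaluation map gives a classifying morphism $\varphi\colon X\to\mathrm{Gr}(r,H^0(X,F))$ with $F\cong\varphi^*Q$, the vanishing of $c_1(F)=c_1(\varphi^*\det Q)$ together with ampleness of the Pl\"ucker bundle forces $\varphi$ to contract every rational curve, and rational connectedness then makes $\varphi$ constant. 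What each approach buys: the paper's argument is shorter but delegates the whole globalization step to a nontrivial cited theorem (whose strength is that it needs no global generation at all, only triviality on rational curves); yours is essentially self-contained, re-proving that theorem in the globally generated case from the elementary fact that two general points of a smooth projective rationally connected variety over $\mathbb{C}$ lie on a common rational curve. One stylistic tightening of your last step: rather than choosing points general for two conditions at once, observe that the locus $\{(x,y)\in X\times X : \varphi(x)=\varphi(y)\}$ is closed and contains the dense set of pairs joined by a rational curve, hence is all of $X\times X$; this avoids any fuss about intersecting the open locus $\varphi(x)\neq\varphi(y)$ with the constructible dense locus of connectable pairs. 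Also note the harmless degenerate case $\dim H^0(X,F)=r$, where the Grassmannian is a point and the evaluation map is already an isomorphism $\mathcal{O}_X^{\oplus r}\cong F$.
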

\begin{proof}
    For any morphism $\gamma:\mathbb{P}^1\rightarrow X$, the vector bundle $\gamma^* F$ is globally generated and $c_1(\gamma^*F)$ vanishes. \\
    
    Suppose that $\gamma^*F$ splits as $\oplus \oh_{\mathbb{P}^1}(a_i)$. As $\gamma^* F$ is globally generated, each $a_i$ is non-negative. Note that $c_1(\gamma^*F)$ equals to $\Sigma a_i$. Every $a_i$ must be zero, which means that $\gamma^*F$ is trivial. By \cite[Theorem 1.1]{biswas2009vector}, the vector bundle $F$ is trivial. 
\end{proof}

We  consider all generalised Grassmannians and we list their VMRTs in Table \ref{table}. We also associate each variety  $X$ in Table \ref{table} a number $a(X)$. If the VMRT of $X$  is a product of some factors, then   $a(X)$ is the minimal number among the dimensions of these factors. For example,  the VMRT of $Gr(k,n+1)$ is $\mathbb{P}^{k-1}\times \mathbb{P}^{n-k}$. There are two factors $\mathbb{P}^{k-1}$ and $\mathbb{P}^{n-k}$ and $a(Gr(k,n+1))$ is $\min(k-1,n-k)$. If $X$ is $SG(k,2n)$ $(2\le k \le n-1)$ or $F_4/P_3$, the VMRT of $X$ is a fibration over a projective space $\mathbb{P}^{l}$, then $a(X)$ is the minimal number among $l$ and the dimension of the fiber. For the definitions of the effective good divisibility  $e.d.(X)$ and good divisibility $g.d.(X)$ for any $X$, we refer to \cite{munoz2020splitting} and \cite{Pan2015Tri}. For the calculation of $e.d.$$(\mathrm{VMRT})$, we refer to \cite{https://doi.org/10.1002/mana.202300036}, \cite{hu2023effective} and the following Lemma \ref{cal ed for bd}.

\begin{lemma}\label{cal ed for bd}
Let $X$ be a smooth variety with $e.d.(X)=r$. Let $E_1, E_2$ be two vector bundles on $X$ with $\mathrm{rk}(E_1)+\mathrm{rk}(E_2)=n$. Then  $e.d.(\mathbb{P}_X(E_1\oplus E_2))$ is at most $\min(r,n-1)$. In particular, if $g.d.(X)$ is $e.d.(X)$, the number $e.d.(\mathbb{P}_X(E_1\oplus E_2))$ is  $\min(r,n-1).$
\end{lemma}
\begin{proof}

Let us denote the projective bundle by $\pi:M=\mathbb{P}_X(E_1\oplus E_2)\rightarrow X$. Since $e.d.(X)$ is  $r$, there exist effective cycles $x_i(\in A^{i}(X)),x_j(\in A^{j}(X))$ satisfying $x_i\cdot x_j=0$ and $i+j=r+1$. Then $\pi^{*}x_i(\in A^{i}(M))$ and $\pi^*x_j(\in A^{j}(M))$ are effective cycles on $M$ whose intersection  is $0$, which implies that $e.d.(M)$ is at most $r$. \\

The projections $E_1\oplus E_2\rightarrow E_1$ and $E_1\oplus E_2\rightarrow E_2$ induce closed embeddings $s_1:\mathbb{P}_X(E_1)\rightarrow M$ and $s_2:\mathbb{P}_X(E_2)\rightarrow M$. Note that  we have $s_1(\mathbb{P}_X(E_1))\cap s_2(\mathbb{P}_X(E_2))=\emptyset$. Let $n_i$ ($i=1,2$) be the rank of $E_i$. Let $y_i$$(i=1,2)$ be the effective cycle corresponding to $s_i(\mathbb{P}_X(E_i))$. We have $y_1\cdot y_2=0$ with codim $y_1+$codim $y_2=n$. Therefore $e.d.(M)$ is at most $n-1$ and hence $e.d.(M)$ is at most $\min(r,n-1)$. \\

If $g.d.(X)$ is $e.d.(X)$, by \cite[Lemma 5.5]{Pan2015Tri}, the number $g.d.(M)$ is  $\min(r,n-1)$. Since we always have $e.d.(M)\geq g.d.(M)$, the number  $e.d.(M)$ is  $ \min(r,n-1)$. 
\end{proof}

We now calculate $e.d.(\mathrm{VMRT})$ of a generalised Grassmannian $X$.\\

Note that if $X$ is a projective space or a smooth quadric, the number $g.d.(X)$ is  $e.d.(X)$ (\cite[Lemma 5.1, Lemma 5.3, Lemma 5.4]{Pan2015Tri}). For $Gr(k,n+1),SG(k,2n)(2\leq k \leq n-1), OG(k,2n+1)(2\leq k \leq n-1)$ and $OG(k,2n)(2\leq k \leq n-2)$, their VMRTs are projective bundles of some splitting vector bundles over  projective spaces or  smooth quadrics.  So we apply Lemma \ref{cal ed for bd} and obtain the effective good divisibility of their VMRTs as in Table \ref{table}.\\

For $E_7/P_5$ and $E_8/P_5$, we know that $g.d.(Gr(2,5))$ is $3$ by \cite[Lemma 4.4]{munoz2020splitting}. By \cite[Lemma 5.2]{Pan2015Tri}, we have $g.d.(Gr(2,5)\times \mathbb{P}^2)=2$ and $g.d.(Gr(2,5)\times \mathbb{P}^3)=3$.  By similar arguments as above,  there are inequalities $e.d.(Gr(2,5)\times \mathbb{P}^2)\leq 2$ and $~ e.d.(Gr(2,5)\times \mathbb{P}^3)\leq 3$. Then we get $e.d.(Gr(2,5)\times \mathbb{P}^2)= 2$ and $~ e.d.(Gr(2,5)\times \mathbb{P}^3)= 3$. \\

We now show that $g.d.(OG(5,10))$ is  $7$.  The Chow ring of $OG(5,10)$ is 
$$A^{\bullet}(OG(5,10))=\frac{\mathbb{Z}[X_1,X_3]}{(X_3^2-4X_1^3X_3+2X_1^6,12X_1^5X_3-7X_1^8)},$$
where each $X_i$ represents a Schubert subvariety of codimension $i$ (for example, see \cite[Lemma 3.1, Claim a]{du2021vector}). Note that the defining  ideal of $A^{\bullet}(OG(5,10))$ is a homogeneous ideal that is  generated by $X_3^2-4X_1^3X_3+2X_1^6$ and $12X_1^5X_3-7X_1^8$. The polynomial $X_3^2-4X_1^3X_3+2X_1^6$ is irreducible in $\mathbb{Z}[X_1,X_3]$ and the only possible polynomial of degree $7$  in the defining  ideal is $X_1(X_3^2-4X_1^3X_3+2X_1^6)$. So $g.d.(OG(5,10))$ is at least $7$. As $12X_1^5X_3-7X_1^8$ decomposes as $X_1^5(12X_3-7X_1^3)$, the number
$g.d.(OG(5,10))$ is $7$.  Note that $e.d.(OG(5,10))$ is $7$ (see \cite[Theorem 1.1]{hu2023effective}). The effective good divisibility of the VMRT of $E_8/P_6$ is $2$.\\

The VMRT of $F_4/P_4$ is a hyperplane section of $OG(5,10)$ and we denote it by $M$. Let the inclusion map be $i:M\hookrightarrow OG(5,10)$. We firstly show that $e.d.(M)$ is at least $6$. Let $y_1 (\in A^{r}(M))$ and $y_2 (\in A^{s}(M))$ be effective cycles with $y_1\cdot y_2=0$ and $r+s=6$. By the Lefschetz hyperplane section theorem, the restriction map $i^{*}:H^{k}(OG(5,10),\mathbb{Z})\rightarrow H^{k}(M,\mathbb{Z})$ is an isomorphism for $k<9$ and is injective for $k=9$. If codim $y_1$ ($=r$) is less than $3$, $y_1$ is the linearly equivalent class of $\lambda H^r|_M$, where $\lambda$ is positive and $H$ is the ample generator of Pic$(OG(5,10))$. So the equality $y_1\cdot y_2=\lambda H^r|_M\cdot y_2=0$ implies that $y_2$ is zero. If codim $y_1$ is $3$, then there are cycles $x_i$ of codimension $3$ on $OG(5,10)$ such that $i^*(x_i)$ is $y_i$ ($i=1,2$). Assume that in the Chow ring of $OG(5,10)$, $x_i$ is represented by a polynomial $f_i$ of degree $3$. Then the equality $y_1\cdot y_2=i^*(x_1\cdot x_2)=x_1\cdot x_2\cdot M=0$ implies that $X_1 f_1f_2$ is in the defining ideal of $A^{\bullet}(OG(5,10)).$ So $X_1(X_3^2-4X_1^3X_3+2X_1^6)$ divides $X_1 f_1f_2$, which means $\mu (X_3^2-4X_1^3X_3+2X_1^6)$ decomposes as $f_1f_2$ for some rational number $\mu$. But $X_3^2-4X_1^3X_3+2X_1^6$ is indecomposable. So $f_1$ or $f_2$ is zero, hence $e.d.(M)$ is at least $6$.\\

Now we show $e.d.(M)< 7.$ Since $e.d.(OG(5,10))$ is $7$, there exist non-zero effective cycles $x_1$ and $x_2$ with $x_1\cdot x_2=0$ and codim $x_1+$codim $x_2=8$. Suppose that $x_i$ is represented by a polynomial $f_i$ in $A^{\bullet}(OG(5,10))$. Then $f_1f_2$ is a linear combination of $X_1^2(X_3^2-4X_1^3X_3+2X_1^6)$ and $12X_1^5X_3-7X_1^8$. So $X_1^2$ divides $f_1f_2$. Without loss of generality, we may assume that $X_1$ divides $f_1$. Then there exists a cycle $z_1$ such that $z_1\cap M$ equals to $x_1$ in $A^{\bullet}(OG(5,10)).$ Let $y_1$ be $i^*(z_1)$. Let $y_2$ be $i^*(x_2)$. Note that $y_1$ is the intersection $z_1\cap M$, which is effective as $x_1$ is effective. So we have two non-zero effective cycles $y_1$ and $y_2$ in $A^{\bullet}(M)$ such that $y_1\cdot y_2$ is zero and codim $y_1+$codim $y_2$ is $7.$ Then $e.d.(M)$ is $6.$\\

When $X$ is $E_6/P_k,(k=3,4),E_7/P_k(k=3,4,6),E_8/P_k(k=3,4,7)$ or $F_4/P_k(k=2,3)$, the VMRT of $X$ is some smooth fibration over $\mathbb{P}^1$. Then the number $e.d.(\mathrm{VMRT})$ is at most $1$, hence $e.d.(\mathrm{VMRT})$ is $1.$\\

To illustrate our main ideas in this section, we firstly prove a relatively simple result as follows.
\begin{proposition}\label{split of uni}
    Let $X$ be a generalised Grassmannian. Assume that $a(X)$ is bigger than $1$. Then any uniform bundle on $X$ of splitting type  $(a_1,a_2,\dots,a_r)$ with $a_1>a_2>\cdots>a_r$
    splits. 
\end{proposition}

\begin{proof}
    We consider a family of lines on $X$ and keep using the  notations in Section $1$.
    Note that $q:\mathcal{U}\rightarrow \mathcal{M}$ is a $\mathbb{P}^1$-bundle. For any $m\in \mathcal{M}$, the fibre $q^{-1}(m)$ is mapped by $p$ isomorphically to a line $L$ on $X$. Let $E$ be a uniform bundle on $X$ of splitting type  $(a_1,a_2,\dots,a_r)$.
    Then the vector bundle $p^*E|_{q^{-1}(m)}$ splits as $\oh_L(a_1)\oplus \cdots \oplus \oh_L(a_r)$. So for the morphism $q:\mathcal{U}\rightarrow \mathcal{M}$, the vector bundle $p^*E$ has a constant H-N type. By \cite[Corolloary 3.1]{SHN},  $p^*E$ admits a relative Harder-Narasimhan filtration  as follows:
    $$0\subset F_1\subset \cdots \subset F_{r-1}\subset F_{r}=p^*E,$$
    where $F_{i}|_{q^{-1}(m)}$ splits as $\oh_L(a_1)\oplus\cdots\oplus \oh_L(a_i)$. \\
    
    For $x\in X$, the fibre $p^{-1}(x)$ parameterizes all lines on $X$ passing through $x$, which is the VMRT of $X$ at $x$. On each $p^{-1}(x)$, the filtration
    $$0\subset F_1|_{p^{-1}(x)}\subset F_2|_{p^{-1}(x)}\cdots \subset F_{r}|_{p^{-1}(x)}=p^*E|_{p^{-1}(x)}$$
    is a flag bundle, which induces a morphism $  \psi_x:p^{-1}(x)\rightarrow A_{r-1}/B$, where $B$ is the Borel subgroup of $A_{r-1}$. The condition $a(X)>1$ implies that $\mathbb{P}^1$ is not a factor of the VMRT of $X$. If $r$ is $2$, the variety  $A_{r-1}/B$ is $\mathbb{P}^1$ and hence the morphism $\psi_x$ is constant. In the case that $r$ is at least $3$, 
   by \cite[Theorem 1]{kumar2023nonexistence}, the morphism $\psi_x$ is constant. Then all the bundles $F_i|_{p^{-1}(x)}$ are trivial.\\

   Let $F'$ be $p^*E/F_{r-1}$. We have an exact sequence
   $$0\rightarrow F_{r-1}\rightarrow p^*E \rightarrow F'\rightarrow 0.$$
   Let $\pi:A_{r-1}/B\rightarrow Gr(r-1,r)(\cong \mathbb{P}^{r-1})$ be the projection. $F'|_{p^{-1}(x)}$ is the pull back of $\oh_{\mathbb{P}^{r-1}}(1)$ via $\pi \circ \psi_x$, hence is a trivial line bundle for every $x\in X$. By the base change theorem, $p_*F_{r-1}$ is a vector bundle of rank $r-1$ and $p_*F'$ is a line bundle on $X$. \\
   
   There is a long exact sequence
   $$0\rightarrow p_*F_{r-1}\rightarrow p_{*}p^*E \rightarrow p_*F'\rightarrow R^{1}p_*F_{r-1}\rightarrow \cdots .$$
   We note the equation $(R^1p_*F_{r-1})_x=H^{1}(p^{-1}(x),F_{r-1}|_{p^{-1}(x)})$. As $F_{r-1}|_{p^{-1}(x)}$ is a trivial bundle and $p^{-1}(x)$ is rationally connected, the cohomology  $H^{1}(p^{-1}(x),F_{r-1}|_{p^{-1}(x)})$ is $0$ and hence $R^1p_*F_{r-1}$ vanishes. So the sequence
  $ 0\rightarrow p_*F_{r-1}\rightarrow E \rightarrow p_*F' \rightarrow 0$ is exact.\\
  
   For any line bundle $\mathcal{L}$ on $X$, the cohomology group $H^{i}(X,\mathcal{L})$ vanishes for any integer $i$ satisfying $0<i<\dim(X)$. If $r$ is $2$, the cohomology $H^1(X,(p_*F')^{\vee}\otimes p_*F_{r-1})$ vanishes, which implies that $E$ is the direct sum of line bundles $p_*F_{r-1}\oplus p_*F'$. Now assume that $r$ is at least $3$. As $p_*F_{r-1}$ is a uniform bundle of splitting  type $(a_1,\dots,a_{r-1})$, by induction,  $p_*{F_{r-1}}$ splits and hence $H^1(X,(p_*F')^{\vee}\otimes p_*F_{r-1})$  vanishes. So $E$  splits. 
   \end{proof}

We can strengthen Proposition \ref{split of uni} and   \cite[Theorem 4.5]{fang2023morphisms} as follows.

\begin{proposition}\label{split of uni bd}
    Assume $X$ is a generalised Grassmannian. Let $E$ be a uniform bundle on $X$ of splitting  type 
    $$(\underbrace{a_1,\dots,a_1}_{j},a_2,\dots,a_{r-j+1}),a_1>a_2>\cdots>a_{r-j+1}~(r\geq j+1,j>0).$$
    If $j$ is smaller than $a(X)$,  $E$ splits.
\end{proposition}
\begin{proof}
    We follow the  arguments  in the proof of  Proposition \ref{split of uni} and use the notations there. The relative Harder-Narasimhan filtration of $p^*E$ is now of the form
    $$0\subset F_1\subset \cdots \subset F_{r-j}\subset F_{r-j+1}=p^*E,$$
    where $F_{i}|_{q^{-1}(m)}$ splits as $\oh_L(a_1)^{j}\oplus \oh_L(a_2)\cdots\oplus \oh_L(a_i)$. On each $p^{-1}(x)$, the filtration
    $$0\subset F_1|_{p^{-1}(x)}\subset F_2|_{p^{-1}(x)}\cdots \subset F_{r-j+1}|_{p^{-1}(x)}=p^*E|_{p^{-1}(x)}$$ induces a morphism 
    $$\psi_x:p^{-1}(x)\rightarrow A_{r-1}/P_{j,j+1,\dots,r-1}.$$
    Note that the condition $a(X)\geq j+1$ ensures that $p^{-1}(x)$ is covered by Fano manifolds of Picard number one whose dimensions are greater than $j$, then $\psi_x$ is constant by Theorem \ref{no map fr Q to A/P}. \\
    
    Let $F'$ be $p^*E/F_{r-j}$. If $r$ is $j+1$, $p_*F_{r-j}$ is a uniform bundle of splitting  type $(a_1,a_1,\dots,a_1)$, then $p_*F_{r-j}\otimes \oh_X(-a_1)$ is trivial by \cite[Theorem 3.4]{Pan2015Tri}. Since $p_*F'$ is a line bundle, we have $H^{1}(X,(p_*F')^{\vee}\otimes p_*F_{r-j})=0$ and $E$ splits. Now assume that $r$ is at least $j+2$, as $p_*F_{r-j}$ is a uniform bundle of the type $(a_1,\dots ,a_1,a_2,\dots,a_{r-j})$, by induction, $p_*{F_{r-j}}$ splits. So $E$ splits. 
\end{proof}
By observing that the effective good divisiblility of $A_r/P_{I}$ is $r$, which is independent of the choice of $I$, we prove the following theorem.

\begin{theorem}\label{ed to split}
    Let $X$ be a generalised Grassmannian $G/P$ and let $E$ be a uniform bundle of rank $r$ on $X$. If  $r$ is at most $ e.d.(\mathrm{VMRT})$,  $E$ splits. 
\end{theorem}
\begin{proof}

   We may assume that $r$ is at least $2$ and 
    $$E|_L\cong \oh_L(a_1)^{i_1}\oplus\cdots \oplus \oh_L(a_m)^{i_m-i_{m-1}}\oplus \oh(a_{m+1})^{r-i_m},~a_1>\cdots>a_m>a_{m+1}$$
    for each line $L\subset X$, where we set $m\geq 1$ and $i_0=0$. As in the proof of Proposition \ref{split of uni}, the relative Harder-Narasimhan filtration induces a morphism
    $$\psi_x:p^{-1}(x)\rightarrow A_{r-1}/P_{i_1,i_2,\dots,i_m}.$$
    Now $\psi_x$ is constant by \cite[Theorem 1.3]{https://doi.org/10.1002/mana.202300036}, then both $p_* F_{m}$ and $p_*F'$ are uniform bundles (here we use the same notations as in the proof of Proposition \ref{split of uni}). By induction, they split as direct sums of line bundles. Since  $H^1(X,(p_*F')^{\vee}\otimes (p_*F_{m}))$ vanishes, the vector bundle $E$  splits.    
\end{proof}
For some generalised Grassmannians, we show that the upper bounds $e.d.(\mathrm{VMRT})$ are optimal.

\begin{theorem}\label{optimal}
    Let $X$ be a generalised Grassmannian in Table \ref{Table 2}. There exists an
  unsplit uniform bundle $E_{\lambda}$ on $X$ of rank $e.d.(\mathrm{VMRT})+1$, where  $E_{\lambda}$ is the
    irreducible homogeneous bundle corresponding to some highest weight $\lambda$.\\
\end{theorem}

\begin{proof}
    We note that for any generalised Grassmannian $X=G/P_j$ in  Table \ref{Table 2}, the marking root $\alpha_j$ is a long root. So there is only one $G$-orbit in the family of lines on $X$ by \cite[Theorem 4.3]{landsberg2003projective}. This implies any homogeneous bundle on $X$ is a uniform bundle. As an irreducible homogeneous bundle is stable (see \cite[Corollary 12.9]{ottaviani1995rational}), every irreducible homogeneous bundle $E$ on $X$ is unsplit. Then by Weyl's formula, one can show that $\mathrm{rk}(E_{\lambda})$ is $e.d.(\mathrm{VMRT})+1$ for any $E_{\lambda}$ in  Table \ref{Table 2}.
\end{proof}

On a generalised Grassmannian $X$, if $E$ is an unsplit uniform bundle on $X$ of rank $e.d.(\mathrm{VMRT})+1$,  there are some restrictions on the splitting type of $E$. During the discussions with Occhetta, he teaches us the statement and the main ideas of the following proposition.
\begin{proposition}\label{only zeros and ones in the type}
    Let $X$ be a generalised Grassmannian. Let $E$ be an unsplit uniform bundle on $X$ whose rank $r$ is $e.d.(\mathrm{VMRT})+1$. Assume that the splitting type of $E$  is $(a_1,\dots,a_k,0,\dots,0)(a_1\geq \cdots \geq a_k >0)$, then $a_i$ is $1$ for any $i$ $(1\leq i \leq k)$.
\end{proposition}
\begin{proof}
    Let $a$ $ (=r-k)$ be the number of zeros in the splitting type of $E$. We consider the universal family of lines on $X$. For any $x\in X$ and any line $C$ passing through $x$, we consider the minimal sections of $\mathbb{P}_C(E|_C)$. These minimal sections covers a linear subspace $\mathbb{P}((\mathcal{O}_C^{\oplus a})_x)$ of $\mathbb{P}(E_x)$. So there exists a morphism   $\rho_x: p^{-1}(x)\rightarrow \mathbb{G}(a-1,\mathbb{P}(E_x))$ which maps  $[C]$  to  $\mathbb{P}((\mathcal{O}_C^{\oplus a})_x).$ \\

 We denote the relative H-N factor corresponding to $(0,\dots,0)$ of $p^*(E)$ by $F.$ Then there exists a short
    exact sequence 
    \begin{align}\label{exact in zero one lemma}
        0\rightarrow E_1 \rightarrow p^*E \rightarrow F \rightarrow 0,
    \end{align}
    where $F$ is a bundle of rank $a$. For any $x\in X$,  the exact sequence 
    $0\rightarrow E_1|_{p^{-1}(x)} \rightarrow E_x\otimes \oh_{p^{-1}(x)} \rightarrow F|_{p^{-1}(x)} \rightarrow 0 $ 
     induces a morphism $\psi_x: p^{-1}(x)\rightarrow \mathbb G(a-1, \mathbb{P}(E_x))$ which maps any point $z\in p^{-1}(x)$ to $\mathbb{P}(F_z)$. Let $q(z)$ be $y.$ Then $p$ maps $q^{-1}(y)$ isomorphically to the line $C$ parametrized by $y.$ So $F_z$ is the vector space $(F|_{q^{-1}(y)})_z$ which is exactly the vector space $(\mathcal{O}_C^{\oplus a})_x $. Then the morphisms $\psi_x$ and $\rho_x$ are the same.\\

    Let $\overline{\mathcal{M}}$ be the family of minimal sections over lines in X. We claim that the family $\overline{\mathcal{M}}$ is unsplit and dominating.\\
    
    If the claim is true, then there are two methods to calculate the dimension of $\overline{\mathcal{M}}$. Since $\overline{\mathcal{M}}$  parameterizes minimal sections over lines on $X$, $\dim \overline{\mathcal{M}}$ should be $\dim \mathcal{M}+a-1.$ As $\overline{\mathcal{M}}$ is unsplit and dominating, $\dim \overline{\mathcal{M}}$ should be $ \dim \mathbb{P}(E)-K_{\mathbb{P}(E)}\cdot \overline{C}-3$ where $\overline{C}$ is a minimal section over a line in $X$. So
    we have an equation:
    \begin{align}\label{dimension formula}
        \dim \mathbb{P}(E)-K_{\mathbb{P}(E)}\cdot \overline{C}-3=\dim \mathcal{M}+a-1.
    \end{align}
    Note that $\dim \mathcal{M}$ is $\dim X-K_X\cdot C-3$ ($C$ is a line in $X$). We deduce that 
     $(a_1+\cdots+a_k)-k$ is $0$, hence every $a_i$ must be $1$.\\

    Now we prove the claim. By \cite[Proposition 5.4]{munoz2021uniform}, the family $\overline{\mathcal{M}}$ is unsplit. If $\overline{\mathcal{M}}$ is not dominating, then for a general $x\in X$, the linear subspaces parameterized by the image of $\rho_x$ do not cover $\mathbb{P}(E_x)$. Let $L$ be the locus of these linear subspaces. Let $z\in \mathbb{P}(E_x)$ be a point not contained in $L$. Let $H\subset \mathbb{P}(E_x)$ be a hyperplane that does not contain $z$. The linear projection $\pi_z:\mathbb{P}(E_x)\dashrightarrow H$ from $z$ to the hyperplane $H$ maps every linear subspace of dimension $a-1$ that does not contain $z$ to a linear subspace in $H$ of dimension $a-1.$ Then there is a rational map $\pi:\mathbb{G}(a-1,\mathbb{P}(E_x))\dashrightarrow \mathbb{G}(a-1,H) $  and the composition $\pi \circ \rho_x$ is a morphism $p^{-1}(x)\rightarrow \mathbb{G}(a-1,H)$. Since $e.d.(\mathbb{G}(a-1,H))(=r-2)$ is smaller than $e.d.(\mathrm{VMRT})(=r-1)$, the morphism $\pi \circ \rho_x$ is constant by \cite[Theorem 1.3]{https://doi.org/10.1002/mana.202300036}. Note that the fiber of $\pi$ is $\mathbb{P}^a\backslash \mathbb{P}^{a-1}$, which is an affine variety. We deduce that $\rho_x$ is constant for general $x\in X$.
Therefore $\psi_x$ is constant for general $x\in X$.  By the universal property for Grassmannians, we have $E_1|_{p^{-1}(x)}$ and $F|_{p^{-1}(x)}$ are trivial bundles. In particular, both $c_1(F|_{p^{-1}(x)})$ and $c_1(E_1|_{p^{-1}(x)})$ vanish. Since the Chern classes of $F|_{p^{-1}(x')}$ do not vary  for $x'\in X$, the first Chern class $c_1(F|_{p^{-1}(x')})$ vanishes for each $x'\in X$. By Lemma \ref{tri on G/P}, $F|_{p^{-1}(x')}$ is trivial. By applying similar argument to $E_1^{\vee}$, $E_1|_{p^{-1}(x')}$ is also trivial for each $x'\in X$. Then as in Proposition \ref{split of uni},  we have an exact sequence $0\rightarrow p_*E_1 \rightarrow E \rightarrow p_*F \rightarrow 0$. Both $p_*E_1$ and $p_*F$ are uniform bundles on $X$ of lower ranks, so they split by Theorem \ref{ed to split}. Then $E$ splits, which is a contradiction.
\end{proof}

In \cite[Theorem 1.1]{occhetta2023morphisms}, Occhetta and Tondelli prove that every morphism  $\varphi:Gr(l,m)\rightarrow Gr(k,m)$ is constant under the assumptions $l\ne 1,m-1 $  and $l\ne k, m-k$. It has an application to uniform bundles on the varieties whose VMRTs are Grassmannians. 
\begin{proposition}\label{split of uni on VMRT Grass}
    Let $X$ be a generalised Grassmannian $G/P$ whose VMRT is $Gr(l,r+1)$ with $l\ne 1,r$. Let $E$ be an unsplit uniform bundle on $X$ of rank $r+1$. Then by twisting $E$ or taking the dual sheaf of $E$ if necessary, the splitting type of $E$ is  $(\underbrace{1,\dots,1}_{r+1-l},\underbrace{0,\dots,0}_{l}).
    $
\end{proposition}
\begin{proof}
    By Proposition \ref{only zeros and ones in the type}, by twisting $E$ suitably, we may assume the splitting type of $E$ is  $(\underbrace{1,\dots,1}_{r+1-k},\underbrace{0,\dots,0}_{k}).
    $ . The relative Harder-Narasimhan filtration of $p^*(E)$ induces a morphism 
    $$\psi_x:p^{-1}(x)\rightarrow Gr(k,r+1).$$
    If $k$ is not $l$ or $r+1-l$, then $\psi_x$ is constant by \cite[Theorem 1.1]{occhetta2023morphisms}. Then by similar arguments as in Proposition \ref{split of uni}, $E$ splits. So $k$ is $l$ or $k$ is $r+1-l$. By taking the dual sheaf if necessary, we may assume that $k$ is $l$.
\end{proof}

\section{Classification of unsplit uniform bundles of  minimal ranks}

In this section, we will prove  Theorem \ref{main1}, $(3)$. Let us make some conventions first. We always classify unsplit uniform bundles up to twisting by some line bundles. On any generalised Grassmannian $X$, we denote by $E_{\lambda}$ the irreducible homogeneous bundle which corresponds to the highest weight $\lambda$. When we consider the family of lines on $X$, we will keep using the notations in Section $1.$\\

The following lemma will be frequently used in the classification.
\begin{lemma}\label{fiber to global}
    Let $g:Y\rightarrow Z$ be a smooth projective morphism between smooth projective varieties. Assume that  $E_1$ and $E_2$ are two vector bundles on $Y$ such that for each $z\in Z$, $E_1|_{g^{-1}(z)}$ is isomorphic to $ E_2|_{g^{-1}(z)}$ and  the bundle $E_1|_{g^{-1}(z)}$ is simple. Then there is an isomorphism 
    $E_1\cong E_2\otimes g^*L$
    for some line bundle $L$ on $Z$. 
\end{lemma}
\begin{proof}
    We denote the sheaf $g_*(E_1\otimes E_2^{\vee})$ by $L$. As the bundle $E_1|_{g^{-1}(z)}(\cong E_2|_{g^{-1}(z)})$ is simple for each $z\in Z$, we have $$H^0(g^{-1}(z),(E_1\otimes E_2^{\vee})|_{g^{-1}(z)})=\mathbb{C}$$
    and hence $L$ is a line bundle by the base-change theorem. We consider the composition of morphisms
    $$g^*g_*(E_1\otimes E_2^{\vee})\otimes E_2\rightarrow E_1\otimes E_2^{\vee}\otimes E_2 \rightarrow E_1,$$where the first map is induced by the natural transformation $g^*g_*\rightarrow id$ and the second map is induced by the trace map $E_2^{\vee}\otimes E_2\rightarrow \oh_{Y}$. Then we have a morphism $g^*L\otimes E_2\rightarrow E_1$ which is an isomorphism when restricting to each fiber $g^{-1}(z)$. So  we get an isomorphism $g^*L\otimes E_2\cong E_1$.
\end{proof}
\subsection{The cases that the VMRT of  $X$ is a Grassmannian}
\begin{theorem}\label{Sharp bundle on X VMRT Grassmannnian}
    Let $X$ be a generalised Grassmannian whose VMRT is $Gr(l,r+1)$ with $l\ne 1, r$. Let $E$ be an unsplit uniform bundle on $X$ of rank $r+1$.
    \begin{itemize}
        \item If $X$ is $ \mathbb{Q}^6$,  $E$ is one of the two spinor bundles $S',~S'' (~S'^{\vee}\cong S''(1))$.
        \item If $X$ is $OG(n,2n),~E_6/P_2,~E_7/P_2$ or $E_8/P_2$,  $E$ is the bundle $E_{\lambda_1}$ or its dual $E_{\lambda_1}^{\vee}$.
    \end{itemize}
\end{theorem}
\begin{proof}
   Without loss of generality,  we may assume $l\leq \frac{r+1}{2}$. For the family of lines on $X$,
    every $p$-fiber $p^{-1}(x)$ is isomorphic to $Gr(l,r+1)$. By Proposition \ref{split of uni on VMRT Grass}, we may assume the splitting type of $E$ is
    $(\underbrace{1,\dots,1}_{r+1-l},\underbrace{0,\dots,0}_{l}).
    $
    
    We firstly  consider the  case: $l<\frac{r+1}{2}$. We denote the relative H-N factor corresponding to $(0,\dots,0)$ of $p^*E$ by $F$. Then there is a short exact sequence 
    \begin{align}\label{exact}
      0\rightarrow E_1 \rightarrow p^*E \rightarrow F \rightarrow 0 ,  
    \end{align}
    where $E_1$ is a vector bundle of rank $r+1-l$ and $F$ is a bundle of rank $l$.\\

   For any $x\in X$, by restricting the above exact sequence (\ref{exact}) to $p^{-1}(x)$, we get the exact sequence 
    $$0\rightarrow E_1|_{p^{-1}(x)} \rightarrow \oh_{p^{-1}(x)}^{\oplus r+1} \rightarrow F|_{p^{-1}(x)} \rightarrow 0 ,$$ 
    which induces a morphism $\psi_x: p^{-1}(x)\rightarrow Gr(r+1-l,r+1)$. Let $U$ and $Q$ be the universal subbundle and quotient bundle on $Gr(r+1-l,r+1)$. By the universal property for Grassmannians, we have $E_1|_{p^{-1}(x)}\cong \psi_x^*U, ~~F|_{p^{-1}(x)}\cong \psi_x^*Q.$
    Since $p^{-1}(x)$ is $ Gr(r+1-l,r+1)$,  $\psi_x$ is either a constant morphism or an isomorphism by \cite[Theorem 1.1]{occhetta2023morphisms}. If $\psi_{x_0}$ is constant for some $x_0\in X$, then $F|_{p^{-1}(x_0)}(\cong \psi_{x_0}^*Q)$ is trivial. Since the Chern classes of $F|_{p^{-1}(x)}$ do not vary  for $x\in X$, the first Chern class $c_1(F|_{p^{-1}(x)})$ vanishes for each $x\in X$. By Lemma \ref{tri on G/P}, $F|_{p^{-1}(x)}$ is trivial and $\psi_x$ is constant for all $x\in X$. The bundle $E$ splits by the similar arguments as in Proposition \ref{split of uni}. \\

     Therefore $\psi_x$ is an isomorphism for each $x\in X$. By the assumption $l<\frac{r+1}{2}$, $\psi_x$ is given by a linear action of $\mathrm{SL}(r+1)$(for example, see \cite[Theorem 1,1]{cowen1989automorphisms}). 
     Since both $U$ and $Q$ are homogeneous bundles, we have isomorphisms
    $E_1|_{p^{-1}(x)}\cong\psi_x^*U\cong U~\text{and} ~ F|_{p^{-1}(x)}\cong \psi_x^*Q\cong Q$. 
    Since  $H^0(Gr(l,r+1),U)$ and $H^1(Gr(l,r+1),U)$ vanish, the sheaves 
    $p_*E_1$ and $R^1p_*E_1$ vanish, which implies an isomorphism $E\cong p_*F$. \\

   Let $E'$ be another  unsplit uniform bundle of rank $r+1$. We may assume the last relative H-N factor of $p^*(E)$ is $F'$ whose rank is $l$. Then  we have $F|_{p^{-1}(x)}\cong  F'|_{p^{-1}(x)}\cong Q$ for each $x\in X$. As the universal quotient bundle $Q$ is simple, by Lemma \ref{fiber to global}, $F$ is isomorphic to $F'\otimes p^*L$ for some line bundle $L$ on $X$. Then there are isomorphisms $E\cong p_*F \cong p_*F'\otimes L \cong E'\otimes L$. So up to twisting by line bundles, $E$ is unique.\\

   We now consider the case: $l=\frac{r+1}{2}$.
   We use the same notations as above. By Proposition \ref{split of uni on VMRT Grass},  both $E_1$ and $F$ are bundles of rank $l$. By similar arguments as above, the induced morphism $\psi_x$ is an isomorphism for each $x\in X$. The isomorphism $\psi_x$ is either given by a linear action of $SL(2l)$ or the composition of a linear action and the dual map $\Lambda: Gr(l,2l)\rightarrow Gr(l,2l)$ (see \cite[Section 2]{cowen1989automorphisms}). Note the isomorphisms $\Lambda^*U\cong Q^{\vee}$ and $\Lambda^*Q\cong U^{\vee}$. As the Chern classes of $F|_{p^{-1}(x)}$ (resp. $E_1|_{p^{-1}(x)}$) do not vary for  $x\in X$ and the Chern polynomials of $Q$ and $U^{\vee}$ are different for $l>1$ (see \cite[Proposition 5.4]{guyot1985caracterisation}), we have the following two possibilities
    $$E_1|_{p^{-1}(x)}\cong U,~F|_{p^{-1}(x)}\cong Q\text{ for all }x\in X~\text{  or  }~E_1|_{p^{-1}(x)}\cong Q^{\vee},~F|_{p^{-1}(x)}\cong U^{\vee}\text{ for all }x \in X.$$
    By dualizing $E$ if necessary, we can further assume that the first possibility holds. Then the same arguments as in the proof of the case $l<\frac{r+1}{2}$ apply.
\end{proof}

\subsection{The case that $X$ is $\mathbb{Q}^5$}
\begin{theorem}\label{sharp bundle on Q5}
    Let $X$ be  $\mathbb{Q}^5$ and let $E$ be an unsplit uniform vector bundle on $X$ of rank $4$. Then $E$ is the spinor bundle on $\mathbb{Q}^5$.
\end{theorem}
\begin{proof}
    Due to Proposition \ref{only zeros and ones in the type},  we may assume that the splitting type of $E$ is one of the following types
    $(0,0,0,0),~(0,0,0,-1), ~(0,0,-1,-1)$ (take the dual sheaf of $E$ if necessary). We can rule out the possibility $(0,0,0,0)$ by \cite[Theorem 3.4]{Pan2015Tri}.\\
    
    The universal family $\mathcal{U}$ of lines on $X$ is $B_3/P_{1,2}$, the parameter space $\mathcal{M}$ is $B_3/P_2$ and the fiber of $p$ is $\mathbb{Q}^3$. By Lemma \ref{q-cohomology of Bn/P}, we list the cohomology rings as follows
    \begin{equation*}
        \begin{aligned}
            &H^\bullet(X,\mathbb{Q})=\mathbb{Q}[X_1]/(X_1^6),~H^{\bullet}(\mathcal{U},\mathbb{Q})=\mathbb{Q}[X_1,X_2]/(\Sigma_d(X_1^2,X_2^2)_{2\leq d\leq 3}),\\
            &H^{\bullet}(\mathcal{M},\mathbb{Q})=\mathbb{Q}[X_1+X_2,X_1X_2]/(\Sigma_d(X_1^2,X_2^2)_{2\leq d\leq 3}).
        \end{aligned}
    \end{equation*}
    Note that the degrees of  the generators in the defining ideals are at least $4$. We denote the Chern classes $c_t(p^*E)(1\leq t \leq 4)$ of $p^*E$ by $\mu_tX_1^t$ for some rational numbers $\mu_t$.\\

    \textbf{Case I}: the splitting type of $E$ is $(0,0,0,-1)$.\\

    The relative H-N filtration induces an exact sequence
    $$0\rightarrow q^*G_1 \rightarrow p^*E \rightarrow q^*G_2\otimes p^*\oh_X(-1) \rightarrow 0,$$ where the rank of $G_1$ is $3$  and $G_2$ is a line bundle. Let $F_1$ be $q^*G_1$ and let $F_2$ be $q^*G_2\otimes p^*\oh_X(-1)$. Now we denote the Chern classes of $F_1$ and $F_2$ as in the following equations: 
    \begin{equation*}
        \begin{aligned}
            &c_1(F_1)=a(X_1+X_2),~c_1(F_2)=b(X_1+X_2)-X_1;\\
            &c_2(F_1)=a_2X_1X_2+a_2'(X_1^2+X_2^2);\\
            &c_3(F_1)=a_3(X_1+X_2)X_1X_2+a_3'(X_1+X_2)(X_1^2+X_2^2).
        \end{aligned}
    \end{equation*}
    The equality $c_1(p^*E)=c_1(F_1)+c_1(F_2)=(a+b-1)X_1+(a+b)X_2=\mu_1X_1$ implies $a=-b$. By comparing the coefficients of $X_2^2$ and $X_1X_2$ on both sides of the equation $\mu_2X_1^2=c_2(p^*E)=c_2(F_1)+c_1(F_1)c_1(F_2)=a_2X_1X_2+a_2'(X_1^2+X_2^2)+[-b(X_1+X_2)][(b-1)X_1+bX_2]=(-b^2+a_2'+b)X_1^2
            +(-2b^2+a_2+b)X_1X_2+(-b^2+a_2')X_2^2$, 
    we deduce $a_2'=b^2$ and $~a_2=2b^2-b$. By comparing the coefficients of the equation $\mu^3X_1^3=c_3(p^*E)=c_3(F_1)+c_2(F_1)c_1(F_2)=[a_3'+b^2(b-1)]X_1^3+[(2b^2-b)(b-1)+b^3+a_3+a_3']X_1^2X_2
            +[b^2(3b-2)+a_3+a_3']X_1X_2^2+[b^3+a_3']X_2^3$, 
    we have $a_3'=-b^3$, $a_3=(2b^2-b)(1-b)$ and $(2b^2-b)(b-1)+b^3=b^2(3b-2)$, which implies that $b$ is $0$ or $b$ is $1$. \\
    
    If $b$ is $0$, then we have $a=a_2=a_2'=a_3=a_3'=0$, which means that the $c_1(F_1|_{p^{-1}(x)})$ and $c_1(F_2|_{p^{-1}(x)})$ vanish for $x\in X$. By Lemma \ref{tri on G/P}, $(F_1|_{p^{-1}(x)})^{\vee}$ and $F_2|_{p^{-1}(x)}$ are trivial. Similar arguments in Proposition \ref{split of uni} show that $E$ splits.\\

    If $b$ is $1$, then we have $a_3'=-1$ and $a_3=0$. So by the equation $c_4(p^*E)=c_3(F_1)c_1(F_2)$, we have
    $\mu_4X_1^4=-(X_1+X_2)(X_1^2+X_2^2)X_2  ~(\mathrm{mod} ~X_1^4+X_1^2X_2^2+X_2^4).$
    Note the coefficient of $X_1^3X_2$ in $(X_1+X_2)(X_1^2+X_2^2)X_2$ (hence on the right hand of the above equation) is not $0$. We exclude the possibility that the splitting type of $E$ is $(0,0,0,-1)$.\\

\textbf{Case II}: the splitting type of $E$ is $(0,0,-1,-1)$.\\

Let $0\rightarrow E_1 \rightarrow p^*E \rightarrow E_2 \rightarrow 0$ be the exact sequence induced by the relative H-N filtration. There are bundles $G_1$ and $G_2$ of rank $2$ on $\mathcal{M}$ satisfying $E_1=q^*G_1$ and $E_2=q^*G_2\otimes p^*\oh_X(-1)$. We denote the Chern classes in the following equations 
    \begin{equation*}
        \begin{aligned}
            &c_1(E_1)=a(X_1+X_2),~c_2(E_1)=a_2X_1X_2+a_2'(X_1^2+X_2^2),\\
            &c_1(E_2)=b(X_1+X_2)-2X_1,~c_2(E_2)=b_2X_1X_2+b_2'(X_1^2+X_2^2)-b(X_1+X_2)X_1+X_1^2.
        \end{aligned}
    \end{equation*}
    From $c_1(p^*E)=c_1(E_1)+c_1(E_2)=(a+b-2)X_1+(a+b)X_2=\mu_1X_1$, we have $b=-a$. From $c_2(p^*E)=c_2(E_1)+c_1(E_1)c_1(E_2)+c_2(E_2)$, we have
   \begin{align}
            \mu_2X_1^2=&a_2X_1X_2+a_2'(X_1^2+X_2^2)+[-a(X_1+X_2)-2X_1][a(X_1+X_2)]\nonumber\\
            +&b_2X_1X_2+b_2'(X_1^2+X_2^2)+a(X_1+X_2)X_1+X_1^2\label{formula 1 in iijj}.
   \end{align}
    The coefficient of $X_1X_2$ in (\ref{formula 1 in iijj}) is $-2a^2-a+a_2+b_2$, and the coefficient of $X_2^2$ is $-a^2+b_2'+a_2'$. 
    So we have 
  \begin{align}\label{formula a}
   a_2+b_2=2a^2+a~\text{and} ~a_2'+b_2'=a^2.   
  \end{align}  
     Moreover, from the equation $c_3(p^*E)=c_1(E_1)c_2(E_2)+c_2(E_1)c_1(E_2)$, there is an equation 
    \begin{align}
        \mu_3X_1^3=&a(X_1+X_2)[(a+b_2'+1)X_1^2+(a+b_2)X_1X_2+b_2'X_2^2] \nonumber\\
        +&[(-a-2)X_1-aX_2][a_2'X_1^2+a_2X_1X_2+a_2'X_2^2].\label{formula 2 in iijj}
    \end{align}
    By comparing the coefficients of $X_1^2X_2,~X_1X_2^2$ and $X_2^3$ in (\ref{formula 2 in iijj}), we have the following equations
    \begin{align}
        2a^2-aa_2-aa_2'+ab_2+ab_2'+a-2a_2=0,\label{formula 3 in iijj}\\
        a^2-aa_2-aa_2'+ab_2+ab_2'-2a_2'=0,\label{formula 4 in iijj}\\
        a(b_2'-a_2')=0.\label{formula 5 in iijj}
    \end{align}
    If $a$ is $0$, by (\ref{formula a}), (\ref{formula 3 in iijj}) and (\ref{formula 4 in iijj}), we have $a_2=a_2'=0$ and $b=b_2=b_2'=0$. Then $E$ splits by similar arguments as in \textbf{Case I}.\\
    
    If $a$ is not $0$, we have $a_2'=b_2'=\frac{a^2}{2}$ by (\ref{formula 5 in iijj}) and (\ref{formula a}). From (\ref{formula 4 in iijj}) and (\ref{formula a}), we get $a_2=b_2=a^2+\frac{a}{2}$. By these relations and the equation $c_4(p^*E)=c_2(E_1)c_2(E_2)$, we get
    \begin{align}
        \mu_4X_1^4&=[\frac{a^2}{2}X_1^2+(a^2+\frac{a}{2})X_1X_2+\frac{a^2}{2}X_2^2][(\frac{a^2}{2}+a+1)X_1^2+(a^2+\frac{3a}{2})X_1X_2+\frac{a^2}{2}X_2^2].\label{formula 6 in iijj}
    \end{align}
     The coefficient of $X_1X_2^3$ in (\ref{formula 6 in iijj}) is $\frac{a^2}{2}(a^2+\frac{a}{2})+\frac{a^2}{2}(a^2+\frac{3a}{2})=a^3(a+1)$. By the equality $c_4(p^*E)= \mu_4X_1^4 ~(\mathrm{mod}~X_1^4+X_1^2X_2^2+X_2^4)$, we deduce $a=-b=-1$ and $a_2=b_2=a_2'=b_2'=\frac{1}{2}$. So the Chern classes of $E_1$ and $E_2$ are as follows
     \begin{equation*}
        \begin{aligned}
            &c_1(E_1)=-X_1-X_2,~c_2(E_1)=\frac{1}{2}(X_1^2+X_2^2+X_1X_2),\\
            &c_1(E_2)=-X_1+X_2,~c_2(E_2)=\frac{1}{2}(X_1^2+X_2^2-X_1X_2).
        \end{aligned}
    \end{equation*}
    Hence $\det(E_2|_{p^{-1}(x)})$ is isomorphic to $\oh_{\mathbb{Q}^3}(1)$ for each $x\in X$.\\

    There is a morphism $\psi_x:p^{-1}(x)\rightarrow Gr(2,4)$  induced by the relative H-N filtration. Let $U$ be the universal subbundle and let $Q$ be the universal quotient bundle on $Gr(2,4)$. 
    There are isomorphisms 
    $\psi_x^*\oh_{\mathbb{Q}^4}(1)\cong \psi_x^*(\det(Q))\cong \det(E_2|_{p^{-1}(x)})\cong \oh_{\mathbb{Q}^3}(1)$ (here we use the isomorphism $\mathbb{Q}^4\cong Gr(2,4)$). So $\psi_x$ maps lines on $\mathbb{Q}^3$ isomorphically to lines on $\mathbb{Q}^4$. Therefore, as the pull-back of uniform bundles on $\mathbb{Q}^4$, $E_1|_{p^{-1}(x)}$ and $E_2|_{p^{-1}(x)}$ are uniform bundles on $\mathbb{Q}^3$.\\
    
    Let $S$ be the spinor bundle on $\mathbb{Q}^3$. We note that the Chern classes of $E_1|_{p^{-1}(x)}$ (resp. $E_2|_{p^{-1}(x)}$) are the same as those of $S$ (resp. $S(1)$). By the classification of uniform bundles of rank $2$ on $\mathbb{Q}^3$ (cf. \cite[Theorem 3.1]{fritzsche1983linear}), we have $E_2|_{p^{-1}(x)}\cong S(1)$ and $E_1|_{p^{-1}(x)}\cong S$. By the base change theorem and the vanishings $H^i(\mathbb{Q}^3,S)=0(0\leq i \leq 3)$ (cf. \cite[Theorem 2.3]{ottaviani1988spinor}), we get $E\cong p_*E_2$. Then similar arguments as in Theorem \ref{Sharp bundle on X VMRT Grassmannnian} apply.
    \end{proof}

\subsection{The cases that the VMRT of $X$ is a product of several factors}
\ 
\newline
\indent We introduce some notations as follows. Let $X(=G/P_{\beta})$ be a generalised Grassmannian  where $\beta$ is a long root. Let $N$ be the set of roots that are adjacent to $\beta$. We assume that $N$ consists of at least two roots. Now we choose and fix a root $\beta_1\in N$ and set $N_1$ to be $N\backslash \{\beta_1\}$. \\

Let $G_1$ be the subgroup of $G$, where $\mathcal{D}(G_1)$ is the connected component of $\mathcal{D}(G)\backslash N_1$ containing $\beta$. Let $G_1'$ be the subgroup of $G_1$, where $\mathcal{D}(G_1')$ is the connected component of $\mathcal{D}(G_1)\backslash \{\beta\}$ containing $\beta_1$. Let $G_2$ be the subgroup of $G$, where $\mathcal{D}(G_2)$ is the connected component of $\mathcal{D}(G)\backslash \{\beta_1\}$ containing $\beta$.\\

We now consider the Tits fibration:
\[
    \begin{tikzcd}
        G/P_{\beta,\beta_1}\arrow[r,"q_1"]\arrow[d,"p_1"]& G/P_{\beta_1}\\
        G/P_\beta,
    \end{tikzcd}
 \]
 then $q_1^{-1}(y)$ is isomorphic to $G_2/P_\beta$ for any $y\in G/P_{\beta_1}$, $p_1^{-1}(x)$ is isomorphic to $G_1'/P_{\beta_1}$ for any $x\in G/P_\beta$.\\

We have a proposition as follows. 
\begin{proposition}\label{subfamily to uniqueness}
   Assume that $G_1/P_\beta$ is $\mathbb{P}^k(k>1)$ or $\mathbb{Q}^n(n=3,5,6)$. Let $E$ be a uniform bundle of rank $e.d.(\mathrm{VMRT}_{G_1/P_\beta})+1$ on $G/P_\beta$.  Let $\tau_1:G_1/P_\beta \hookrightarrow G/P_\beta \text{ and } \tau_2:G_2/P_\beta \hookrightarrow G/P_\beta$ be the sub-diagram embeddings (for the definition of sub-diagram embeddings, we refer to \cite[section 3.1]{MR2785842}). \\
   
   Suppose that $\tau_2^*(E)$ splits for any $\tau_2$, and $\tau_1^*(E)$ is unsplit for any $\tau_1$. Then there exists a bundle $E_0$ (independent of $E$) such that $E$ is isomorphic to $E_0\otimes L$ or $E_0^{\vee}\otimes L$ for some line bundle $L$.
\end{proposition}
\begin{proof}
   The morphism $p_1$ maps $q_1^{-1}(y)$ isomorphically to its image and the image is a sub-diagram embedding $\tau_2:G_2/P_\beta \hookrightarrow G/P_\beta$. By the assumption, $\tau_2^*(E)$ splits for any $\tau_2$, we have $p_1^*E|_{q_1^{-1}(y)}$ splits for any $y\in G/P_{\beta_1}$. According to the relative H-N filtration of $p_1^*E$, there exists an exact sequence of vector bundles $0\rightarrow E_1 \rightarrow p_1^*E \rightarrow E_2 \rightarrow 0$, where $E_2|_{q_1^{-1}(y)}$ is the last H-N factor of $p_1^*E|_{q_1^{-1}(y)}$.\\

   We consider the Tits fibration 
   \[
    \begin{tikzcd}
        G_1/P_{\beta,\beta_1}\arrow[r,"q"]\arrow[d,"p"]& G_1/P_{\beta_1}\\
        G_1/P_\beta,
    \end{tikzcd}
 \]
   where $G_1/P_{\beta_1}$ is the Fano scheme of lines on $G_1/P_\beta$. We denote $\tau:G_1/P_{\beta,\beta_1}\hookrightarrow G/P_{\beta,\beta_1}$ the sub-diagram embedding. Note that for any $y\in G_1/P_{\beta_1}$, $q^{-1}(y)$ is a line in $q_1^{-1}(y)$. So the restriction of $\tau^*p_1^*E(=p^*\tau_1^*E)$ to $q^{-1}(y)$ has the same splitting type of $p_1^*E|_{q_1^{-1}(y)}$. So there exists an exact sequence $0\rightarrow \tau^*E_1 \rightarrow p^*\tau_1^*E \rightarrow \tau^*E_2 \rightarrow 0$ where $\tau^*E_2|_{q^{-1}(y)}$ is the last H-N factor of $p^*\tau_1^*E|_{q^{-1}(y)}$.\\

   By the assumptions that $G_1/P_\beta$ is $\mathbb{P}^k(k>1)$ or $\mathbb{Q}^n(n=3,5,6)$ and the classification of unsplit uniform bundles of minimal ranks on $G_1/P_\beta$, up to twisting $E$ by a line bundle or taking the dual sheaf, $\tau^*E_1|_{p^{-1}(x)}$ and $\tau^*E_2|_{p^{-1}(x)}$ are fixed bundles on $p^{-1}(x)$, which are independent of $x$.
   We denote $\tau^*E_1|_{p^{-1}(x)}$ by $F_1$ and denote $\tau^*E_2|_{p^{-1}(x)}$ by $F_2$. We may assume the vanishings of $H^i(p^{-1}(x),F_1)(i=0,1)$ and assume that $F_2$ is simple (see \cite[Section 3.5 and Section 4.5 Proposition A]{EHS}, \cite[Page 339]{fritzsche1983linear} and the proofs of Theorem \ref{Sharp bundle on X VMRT Grassmannnian} and Theorem \ref{sharp bundle on Q5}). \\

   As the sub-diagram embedding $\tau_1$ varies, the image of $\tau_1$ covers $G/P_{\beta}$. For $x\in G/P_{\beta}$, we may assume that $x
   $ is in some $\tau_1(G_1/P_\beta)$. Note the equality $p^{-1}(x)=p_1^{-1}(x)(\cong G_1'/P_{\beta_1})$. There are isomorphisms $E_1|_{p_1^{-1}(x)}\cong \tau^*E_1|_{p^{-1}(x)}\cong  F_1$ and $E_2|_{p_1^{-1}(x)}\cong \tau^*E_2|_{p^{-1}(x)}\cong F_2$. Then by similar arguments as in Theorem \ref{Sharp bundle on X VMRT Grassmannnian}, $E$ is ${p_1}_*(E_2)$ and our assertion follows.
\end{proof}

For generalised Grassmannians in the following Table \ref{Table 4},  we choose some suitable $\beta_1$ such that $G_1/P_\beta$ is a projective space or a quadric, where $\alpha_k$ is the $k-$th node in $\mathcal{D}(G)$. 
    \begin{center}
        \begin{longtable}{|c|c|c|c|c|c|}
        \hline
            $X$ & $\beta_1$ & $G_1/P_\beta$ & $G_2/P_\beta$ & $e.d.(\mathrm{VMRT}_{G_1/P_\beta})$ & $e.d.(\mathrm{VMRT}_{G_2/P_\beta})$\\
        \hline
            $B_n/P_k(2\leq k <\frac{2n}{3})$ & $\alpha_{k-1}$ & $\mathbb{P}^k$ & $\mathbb{Q}^{2(n-k)+1}$ & $k-1$ & $2n-2k-1$\\
        \hline
            \multirow{2}{*}{$B_6/P_4$} & $\alpha_3$ & $\mathbb{P}^4$ & $\mathbb{Q}^5$ & $3$ & $3$\\
        \cline{2-6} 
             & $\alpha_5$ & $\mathbb{Q}^5$ & $\mathbb{P}^4$ & $3$ & $3$\\
        \hline 
            $B_n/P_{n-2}(n>6)$ & $\alpha_{n-1}$ & $\mathbb{Q}^5$ & $\mathbb{P}^{n-2}$ & $3$ & $n-3$ \\
        \hline
            $B_n/P_{n-1}(n>3)$ & $\alpha_{n}$ & $\mathbb{Q}^3$ & $\mathbb{P}^{n-1}$ & $1$ & $n-2$ \\
        \hline 
            $D_n/P_k(2\leq k <\frac{2n-2}{3})$ & $\alpha_{k-1}$ & $\mathbb{P}^k$ & $\mathbb{Q}^{2(n-k)}$ & $k-1$ & $2n-2k-3$\\
        \hline
            \multirow{2}{*}{$D_7/P_4$} & $\alpha_3$ & $\mathbb{P}^4$ & $\mathbb{Q}^6$ & $3$ & $3$\\
        \cline{2-6} 
             & $\alpha_5$ & $\mathbb{Q}^6$ & $\mathbb{P}^4$ & $3$ & $3$\\
        \hline
            $D_n/P_{n-3}(n>7)$ & $\alpha_{n-2}$ & $\mathbb{Q}^6$ & $\mathbb{P}^{n-3}$ & $3$ & $n-4$\\
        \hline
            $E_n/P_3(n=6,7,8)$ & $\alpha_1$ & $\mathbb{P}^2$ & $OG(n-1,2n-2)$ & $1$ & $n-2$\\
        \hline 
            $E_n/P_4(n=6,7,8)$ & $\alpha_2$ & $\mathbb{P}^2$ & $Gr(3,n)$ & $1$ & $2$\\
        \hline
            $E_n/P_5(n=6,7,8)$ & $\alpha_6$ & $\mathbb{P}^{n-4}$ & $OG(5,10)$ & $n-5$ & $4$\\
        \hline 
            $E_n/P_6(n=7,8)$ & $\alpha_7$ & $\mathbb{P}^{n-5}$ & $E_6/P_6$ & $n-6$ & $7$\\
        \hline
            $E_8/P_7$ & $\alpha_8$ & $\mathbb{P}^2$ & $E_7/P_7$ & $1$ & $12$\\
        \hline 
            $F_4/P_2$ & $\alpha_1$ & $\mathbb{P}^2$ & $LG(3,6)$ & $1$ & $2$\\
        \hline
        \caption{}
    \label{Table 4}
        \end{longtable}
    \end{center}

\begin{theorem}\label{sharp bundle on ed G1/P<ed G2/P}
     For generalised Grassmannians in Table \ref{Table 4}, the statement $(3)$ in Theorem \ref{main1} is true.
\end{theorem}

\begin{proof}
    We will keep using the notations in the proof of Proposition \ref{subfamily to uniqueness}. Let $X$ be a generalised Grassmannian in Table \ref{Table 4} which is not $B_n/P_{n-1}(n>3),~F_4/P_2,~B_6/P_4$ or $D_7/P_4$. 
   We have $e.d.(\mathrm{VMRT}_X)=e.d.(\mathrm{VMRT}_{G_1/P_\beta})$ and $e.d.(\mathrm{VMRT}_{G_1/P_\beta})<e.d.(\mathrm{VMRT}_{G_2/P_\beta})$. Any $2$-plane in $X$ is contained in a $G_1/P_\beta$ or $G_2/P_\beta$. As $\mathrm{rk}(E)(=e.d.(\mathrm{VMRT}_X)+1)$ is at most $e.d.(\mathrm{VMRT}_{G_2/P_\beta})$, $E|_{G_2/P_\beta}$ splits. So by \cite[Corollary 3.6]{du2021vector}, $E|_{G_1/P_\beta}$ does not split for at least one sub-diagram embedding. As the uniform bundles of rank $e.d.(\mathrm{VMRT}_{G_1/P_\beta})+1$ on $G_1/P_\beta$ are determined by their Chern classes (see \cite{EHS} or \cite[Theorem 2]{guyot1985caracterisation}, Theorem \ref{Sharp bundle on X VMRT Grassmannnian} and Theorem \ref{sharp bundle on Q5}), $E|_{G_1/P_\beta}$ does not split for any sub-diagram embedding $G_1/P_\beta \hookrightarrow G/P_\beta$. Then our assertion follows from Proposition \ref{subfamily to uniqueness}. \\

    If $X$ is $B_n/P_{n-1}(n>3)$ or $F_4/P_2$, the rank of $E$ is $2$ and hence $E|_{G_2/P_\beta}$ splits. Now we assume that  $E|_{G_1/P_\beta}$ splits. Note that in this case, $G_1/P_\beta$ is $\mathbb{Q}^3$ or $\mathbb{P}^2$. By Proposition \ref{only zeros and ones in the type}, without loss of generality, we may assume the splitting type of $E$ is $(0,-1)$. Then $E_1$ and $E_2$ in the exact sequence $0\rightarrow E_1 \rightarrow p_1^*E \rightarrow E_2 \rightarrow 0$ are line bundles. Since $E|_{G_1/P_\beta}$ splits, according to the proof of the classification of unsplit uniform bundles on $\mathbb{Q}^3$ and $\mathbb{P}^2$, we know that $\tau_1^*E_1|_{p^{-1}(x)}$ and $\tau_1^*E_2|_{p^{-1}(x)}$ are trivial line bundles, which implies that $E_1|_{p_1^{-1}(x)}$ and $E_2|_{p_1^{-1}(x)}$ are trivial bundles. Then there is an exact sequence $0\rightarrow {p_1}_*E_1 \rightarrow E \rightarrow {p_1}_*E_2 \rightarrow 0$, where ${p_1}_*E_1$ and ${p_1}_*E_2$ are line bundles. As $\dim(X)$ is at least $2$, the above sequence splits and $E$ is isomorphic to ${p_1}_*E_1\oplus {p_1}_*E_2$. Therefore if $E|_{G_1/P_\beta}$ splits, $E$ splits. So we may assume $E|_{G_1/P_\beta}$ unsplits.  Then by Proposition \ref{subfamily to uniqueness}, we are done.\\

    If $X$ is $B_6/P_4$, the rank of $E$ is $4$. Every $2$-plane is contained in a $\mathbb{P}^4$ or a $\mathbb{Q}^5$. If $E|_{\mathbb{P}^4}$ and $E|_{\mathbb{Q}^5}$ split, then by \cite[Corollary 3.6]{du2021vector}, $E$ splits. If $E|_{\mathbb{P}^4}$ splits and $E|_{\mathbb{Q}^5}$ is unsplit, we choose $\beta_1$ to be the fifth root in $B_6$. If $E|_{\mathbb{Q}^5}$ splits and $E|_{\mathbb{P}^4}$ is unsplit, we choose $\beta_1$ to be the third root in $B_6$. Note that the splitting types of unsplit uniform bundles of rank $4$ on $\mathbb{P}^4$ and $\mathbb{Q}^5$ are different. Our theorem follows from Proposition \ref{subfamily to uniqueness}. The proof for the case that $X$ is $D_7/P_4$ is quite similar. 
\end{proof}

Now we deal with the remaning cases in Theorem \ref{main1}, $(3)$.

\begin{proposition}\label{sharp bundle on B3/P2}
    Let $X$ be the generalised Grassmannian $B_3/P_{2}$. Then Theorem \ref{main1}, $(3)$ is true for $X.$
\end{proposition}
\begin{proof}
    The universal family of lines $\mathcal{U}$ is $B_3/P_{1,2,3}$ and the parameter space $\mathcal{M}$ is $B_3/P_{1,3}$. By Lemma \ref{q-cohomology of Bn/P}, we list the cohomology rings as follows:
    \begin{equation*}
        \begin{aligned}
            &H^{\bullet}(X,\mathbb{Q})=\mathbb{Q}[X_1,X_2;]/(\Sigma_i(X_1^2,X_{2}^2)_{2\leq i \leq 3});\\
            &H^{\bullet}(\mathcal{U},\mathbb{Q})=\mathbb{Q}[X_1;X_{2};X_3;]/(\Sigma_i(X_1^2,X_2^2,X_{3}^2)_{1\leq i \leq 3});\\
            &H^{\bullet}(\mathcal{M},\mathbb{Q})=\mathbb{Q}[X_1;X_{2},X_{3};]/(\Sigma_i(X_1^2,X_2^2,X_{3}^2)_{1\leq i \leq 3}).
        \end{aligned}
    \end{equation*}
    By Proposition \ref{only zeros and ones in the type}, we may assume that the splitting type of $E$ is $(0,-1)$. Then the relative H-N filtration of $p^*E$ induces an exact sequence:
    \begin{align}\label{exact for Bn/Pn-1}
        0\rightarrow E_1\rightarrow p^*E \rightarrow E_2\rightarrow 0.
    \end{align}
    There are line bundles $G_1,G_2$ on $\mathcal{M}$ satisfying $E_1=q^*G_1$ and $E_2=q^*G_2\otimes p^*\oh_X(-1)$. We denote the Chern characters by 
    $c_1(E_1)=aX_1+b(X_2+X_{3}),~c_1(E_2)=a'X_1+b'(X_2+X_3)-(X_1+X_2),~c_i(p^*E)=s_i(X_1,X_{2})(i=1,2)$, 
    where all undetermined coefficients are rational numbers. The homogeneous polynomial $s_i(X_1,X_2)$ is of degree $i$ and is symmetrical in variables $X_1,X_{2}$.\\

    Note that the degrees of the generators in the ideals of cohomology rings are at least $2$. From the equation $c_1(p^*E)=c_1(E_1)+c_1(E_2)$, we get
    $(a+a')(X_1)+(b+b')(X_2+X_{3})-(X_1+X_{2})=s_1(X_1,X_{2}).$
    The coefficient of $X_3$ in $s_1(X_1,X_2)$ is $0$, which implies that $b+b'$ is $0$. Since $s_1(X_1,X_2)$ is symmetrical in $X_1$ and $X_2$, $a+a'$ is $0$. From $c_2(p^*E)=c_1(E_1)c_1(E_2)$, we have
    \begin{align}
        &-[aX_1+bX_{2}+bX_3][(a+1)X_1+(b+1)X_{2}+bX_3]=s_2(X_1,X_{2})+\mu(X_1^2+X_2^2+X_3^2),\label{Bn/Pn-1 c_1(E1)c_1(E_2)}
    \end{align}
    where $\mu$ is a rational number. The coefficient of $X_1X_3$ on the left side of (\ref{Bn/Pn-1 c_1(E1)c_1(E_2)}) is $-b(2a+1)$ and the coefficient of $X_2X_{3}$ on the left side of (\ref{Bn/Pn-1 c_1(E1)c_1(E_2)}) is $-b(2b+1)$. So both $b(2a+1)$ and $b(2b+1)$ are $0$.\\

    If $b$ is $0$, the coefficient of $X_3^2$ on the left side of (\ref{Bn/Pn-1 c_1(E1)c_1(E_2)}) is $0$. So $\mu$ is $0$ and we have
    $-aX_1[(a+1)X_1+X_{2}]=s_2(X_1,X_{2})$. 
    As $s_2(X_1,X_2)$ is symmetrical in $X_1$ and $X_2$, the only possibilities are $a=0$ or $a=-1$. \\
    
    When $a$ is $0$, both the line bundles $G_1$ and $G_2$ are trivial. By performing $R^ip_*$ to the exact sequence (\ref{exact for Bn/Pn-1}), we deduce that $E$ splits, which is impossible. When $a$ is $-1$, 
    $c_1(E_1)$ is $-X_1$ and $c_1(E_2)$ is $-X_2$$(=X_1-(X_1+X_2))$. 
    Let $L_{\lambda_1}$ be the line bundle on $\mathcal{U}=B_3/P_{1,2,3}$ corresponding to the weight $\lambda_1$. We have $E_1\cong L_{\lambda_1}^{-1}$ and $E_2\cong L_{\lambda_1}\otimes p^*\oh_X(-1)$. For each fiber $p^{-1}(x)(=\mathbb{P}^{1}\times \mathbb{P}^1)$, $E_1|_{p^{-1}(x)}$ is isomorphic to $\oh_{\mathbb{P}^{1}}(-1) \boxtimes \oh_{\mathbb{P}^1}$. By the base change theorem,  we have $p_*E_1=R^1p_*E_1=0$ . Hence there are isomorphisms $E\cong p_*L_{\lambda_1}\otimes \oh_X(-1)\cong E_{\lambda_1}\otimes \oh_X(-1)$.\\

    If $b$ is not $0$, then we have $a=b=-\frac{1}{2}$. The Chern classes of $E_1$ and $E_2$ satisfy
    $$c_1(E_1)=-\frac{1}{2}(X_1+X_2+X_3)~\text{and}~c_1(E_2)=\frac{1}{2}(X_1+X_2+X_3)-(X_1+X_{2}).$$
    Let $L_{\lambda_3}$ be the line bundle on $\mathcal{U}=B_3/P_{1,2,3}$ corresponding to the weight $\lambda_3$. By comparing the first Chern classes of $E_1$ and $E_2$ with the class of $L_{\lambda_3}$ (see \cite[PLATE II (VI)]{lieGroupsandLiealgebras4-6}),
    we have $E_1\cong L_{\lambda_3}^{-1}$ and $E_2\cong L_{\lambda_3}\otimes p^*\oh_X(-1)$. For each fiber $p^{-1}(x)=\mathbb{P}^{1}\times \mathbb{P}^1$, $E_1|_{p^{-1}(x)}$ is isomorphic to $\oh_{\mathbb{P}^{1}} \boxtimes \oh_{\mathbb{P}^1}(-1)$. By the base change theorem, we have $p_*E_1=R^1p_*E_1=0$ . Therefore $E$ is $E_{\lambda_3}\otimes \oh_X(-1)$.
\end{proof}

\begin{proposition}\label{sharp bundle on Dn/Pn-2}
    Let $X$ be the generalised Grassmannian $D_n/P_{n-2}(n\geq 4)$. Then Theorem \ref{main1}, $(3)$ is true for $X.$
    
\end{proposition}
\begin{proof}
    The universal family of lines $\mathcal{U}$ is $D_n/P_{n-3,n-2,n-1,n}$ and the parameter space $\mathcal{M}$ is $D_n/P_{n-3,n-1,n}$. 
    By Lemma \ref{q-cohomology of Dn/P}, we list the cohomology rings as follows:
    \begin{equation*}
        \begin{aligned}
            &H^{\bullet}(X,\mathbb{Q})=\mathbb{Q}[X_1,\dots,X_{n-2};\eta_{n-2}]/(\Sigma_i(X_1^2,\dots,X_{n-2}^2)_{3\leq i \leq n},\eta_{n-2}^2-\Sigma_2(X_1^2,\dots,X_{n-2}^2),X_1\cdots X_n);\\
            &H^{\bullet}(\mathcal{U},\mathbb{Q})=\mathbb{Q}[X_1,\dots,X_{n-3};X_{n-2};X_{n-1};X_n;]/(\Sigma_i(X_1^2,\dots,X_{n}^2)_{1\leq i \leq n},X_1\cdots X_n);\\
            &H^{\bullet}(\mathcal{M},\mathbb{Q})=\mathbb{Q}[X_1,\dots,X_{n-3};X_{n-2},X_{n-1};X_{n};]/(\Sigma_i(X_1^2,\dots,X_{n}^2)_{1\leq i \leq n},X_1\cdots X_n),
        \end{aligned}
    \end{equation*}
    where $\eta_{n-2}$ is $X_{n-1}X_n$.
    By Proposition \ref{only zeros and ones in the type}, up to twisting $E$ by a line bundle, we may assume that the splitting type of $E$ is $(0,-1)$. Then the relative H-N filtration of $p^*E$ induces an exact sequence
    \begin{align}\label{exact for Dn/Pn-2}
        0\rightarrow E_1\rightarrow p^*E \rightarrow E_2\rightarrow 0.
    \end{align}
   There are line bundles $G_1,G_2$ on $\mathcal{M}$ satisfying $E_1=q^*G_1$ and $E_2=q^*G_2\otimes p^*\oh_X(-1)$. We denote the Chern classes by
    \begin{equation*}
        \begin{aligned}
            &c_1(E_1)=a(X_1+\dots+X_{n-3})+b(X_{n-2}+X_{n-1})+cX_n;\\
            &c_1(E_2)=a'(X_1+\dots+X_{n-3})+b'(X_{n-2}+X_{n-1})+c'X_n-(X_1+\dots+X_{n-2});\\
            &c_1(p^*E)=s_1(X_1,\dots,X_{n-2}),~c_2(p^*E)=s_2(X_1,\dots,X_{n-2})+\gamma X_{n-1}X_n,
        \end{aligned}
    \end{equation*}
    where all undetermined coefficients are rational numbers. The homogeneous polynomial $s_i(X_1,\dots,X_{n-2})$ is of degree $i$ and is symmetrical in variables $X_1,\dots,X_{n-2}$.\\

    The degrees of the generators in the ideals of cohomology rings are at least $2$. From the equation    $c_1(p^*E)=c_1(E_1)+c_1(E_2)$, we get\
    \begin{align}
        &(a+a')(X_1+\dots+X_{n-3})+(b+b')(X_{n-2}+X_{n-1})+(c+c')X_n\nonumber\\
            =&s_1(X_1,\dots,X_{n-2})+(X_1+\dots+X_{n-2})\label{Dn/Pn-2, c1(E1)+c1(E2)}.
    \end{align}
    The coefficients of $X_{n-1}$ and $X_n$ on the right side of  (\ref{Dn/Pn-2, c1(E1)+c1(E2)}) are $0$. Both $b+b'$ and $c+c'$ are $0$. The polynomial on the right side of  (\ref{Dn/Pn-2, c1(E1)+c1(E2)}) is symmetrical in $X_1,\dots,X_{n-2}$, so we get $a+a'=0$. From the equation $c_2(p^*E)=c_1(E_1)c_1(E_2)$, we have
    \begin{align}
        &-[a(X_1+\dots+X_{n-3})+bX_{n-2}+bX_{n-1}+cX_n][(a+1)(X_1+\dots+X_{n-3})+(b+1)X_{n-2}\nonumber\\
          &+bX_{n-1}+cX_n]=s_2(X_1,\dots,X_{n-2})+\gamma X_{n-1}X_n+\mu(X_1^2+\dots+X_n^2),\label{Dn/Pn-2 c_1(E1)c_1(E_2)}      
    \end{align}
    where $\mu$ is a rational number. The coefficients of $X_jX_n(1\leq j \leq n-3)$ and $X_{n-2}X_n$ on the left side of (\ref{Dn/Pn-2 c_1(E1)c_1(E_2)}) are $-c(2a+1)$ and $-c(2b+1)$ respectively. So we have $c(2a+1)=c(2b+1)=0$.\\

    If $c$ is $0$, the coefficient of $X_n^2$ on the left side of (\ref{Dn/Pn-2 c_1(E1)c_1(E_2)}) is $0$.  So we have $\mu=0$ and the coefficient of $X_{n-1}^2$ on the right side of (\ref{Dn/Pn-2 c_1(E1)c_1(E_2)}) is $0$, which means that $b$ is $0$. Then (\ref{Dn/Pn-2 c_1(E1)c_1(E_2)}) is independent of $X_{n-1}$, we deduce $\gamma=0$ and
    $$-[a(X_1+\dots+X_{n-3})][(a+1)(X_1+\dots+X_{n-3})+X_{n-2}]=s_2(X_1,\dots,X_{n-2}).$$
    The coefficient of $X_{n-2}^2$ on the left hand side is $0$. So the coefficient of  $X_j^2(1\leq j \leq n-3)$ on the left hand side is also $0$, as  $s_2$ is a symmetric polynomial. So $a(a+1)$ is $0$.\\

    When $a$ is $0$, both the line bundles $G_1$ and $G_2$ are trivial. By performing $R^ip_*$ to the  exact sequence (\ref{exact for Dn/Pn-2}), we deduce that $E$ splits.  when $a$ is $-1$,  the product $(X_1+\dots+X_{n-3})X_{n-2}$ is symmetrical in $X_1,\dots,X_{n-2}$, which means that  $n$ is $4$. If follows that
    $c_1(E_1)$ is $-X_1$ and $c_1(E_2)$ is  $-X_2$$(=X_1-(X_1+X_2))$. Let $L_{\lambda_1}$ be the line bundle on $\mathcal{U}=D_4/P_{1,2,3,4}$ corresponding to the weight $\lambda_1$. By comparing the first Chern class with the class of $L_{\lambda_1}$ (see \cite[PLATE IV (VI)]{lieGroupsandLiealgebras4-6}), we have $E_1\cong L_{\lambda_1}^{-1}$ and $E_2\cong L_{\lambda_1}\otimes p^*\oh_X(-1)$. For each fiber $p^{-1}(x)=\mathbb{P}^{1}\times\mathbb{P}^1\times \mathbb{P}^1$, $E_1|_{p^{-1}(x)}$ is isomorphic to $\oh_{\mathbb{P}^{1}}(-1) \boxtimes \oh_{\mathbb{P}^1}\boxtimes \oh_{\mathbb{P}^1}$.  By the base change theorem, we have $p_*E_1=R^1p_*E_1=0$. Therefore $E$ is 
    $E_{\lambda_1}\otimes \oh_X(-1)$.\\

    If $c$ is not $0$, then we have $a=b=-\frac{1}{2}$. We rewrite formula (\ref{Dn/Pn-2 c_1(E1)c_1(E_2)}) into the following equations:
    \begin{align}
        &-[-\frac{1}{2}(X_1+\dots+X_{n-2})-\frac{1}{2}X_{n-1}+cX_n][\frac{1}{2}(X_1+\dots+X_{n-2})-\frac{1}{2}X_{n-1}+cX_n]\nonumber\\
            =&\frac{1}{4}(X_1+\dots+X_{n-2})^2-(cX_n-\frac{1}{2}X_{n-1})^2=s_2(X_1,\dots,X_{n-2})+\gamma X_{n-1}X_n+\mu(X_1^2+\dots+X_n^2).\label{Dn/Pn-2 coeff xn2 and Xn-12}
    \end{align}
    The coefficients of $X_{n-1}^2$ and $X_n^2$ are the same in (\ref{Dn/Pn-2 coeff xn2 and Xn-12}), so we have $c^2=\frac{1}{4}$.\\

    If $c$ is $\frac{1}{2}$, the Chern classes of $E_1$ and $E_2$ are
    $c_1(E_1)=-\frac{1}{2}(X_1+\dots+X_{n-1}-X_n)$ and $c_1(E_2)=\frac{1}{2}(X_1+\dots+X_{n-1}-X_n)-(X_1+\dots+X_{n-2}).$
    Let $L_{\lambda_{n-1}}$ be the line bundle on $\mathcal{U}=D_n/P_{n-3,n-2,n-1,n}$ corresponding to the weight $\lambda_{n-1}$. By comparing the first Chern class with the class of $L_{\lambda_{n-1}}$(see \cite[PLATE IV (VI)]{lieGroupsandLiealgebras4-6}), we have $E_1\cong L_{\lambda_{n-1}}^{-1}$ and $E_2\cong L_{\lambda_{n-1}}\otimes p^*\oh_X(-1)$. For each fiber $p^{-1}(x)=\mathbb{P}^{n-3}\times \mathbb{P}^1\times \mathbb{P}^1$, where the first $\mathbb{P}^1$ factor corresponds to the $(n-1)$-th node and the second $\mathbb{P}^1$ factor corresponds to the $n$-th node. The restriction $E_1|_{p^{-1}(x)}$ is isomorphic to $\oh_{\mathbb{P}^{n-3}} \boxtimes\oh_{\mathbb{P}^1}(-1) \boxtimes \oh_{\mathbb{P}^1}$. By the base change theorem, we have $p_*E_1=R^1p_*E_1=0$ . There are isomorphisms $E\cong p_*L_{\lambda_{n-1}}\otimes \oh_X(-1)\cong E_{\lambda_{n-1}}\otimes \oh_X(-1)$.\\
    
    If $c$ is $-\frac{1}{2}$, the Chern classes of $E_1$ and $E_2$ are
    $c_1(E_1)=-\frac{1}{2}(X_1+\dots+X_n) $and $ c_1(E_2)=\frac{1}{2}(X_1+\dots+X_n)-(X_1+\dots+X_{n-2}).$
    Let $L_{\lambda_n}$ be the line bundle on $\mathcal{U}=D_n/P_{n-3,n-2,n-1,n}$ corresponding to the weight $\lambda_n$. By comparing the first Chern class with the class of $L_{\lambda_n}$, we have $E_1\cong L_{\lambda_n}^{-1}$ and $E_2\cong L_{\lambda_n}\otimes p^*\oh_X(-1)$. By similar arguments as above, we have isomorphisms $E\cong p_*E_2\cong E_{\lambda_n}\otimes \oh_X(-1)$.
\end{proof}
Combining Theorem \ref{Sharp bundle on X VMRT Grassmannnian}, Theorem \ref{sharp bundle on Q5}, Theorem \ref{sharp bundle on ed G1/P<ed G2/P}, Proposition \ref{sharp bundle on B3/P2}, Proposition \ref{sharp bundle on Dn/Pn-2} and some known classification results (see \cite{EHS,guyot1985caracterisation, fritzsche1983linear,munoz2012uniform}), we complete the proof of Theorem \ref{main1} $(3)$.
\begin{remark}

    For the following generalised Grassmannians, we don't know whether the bounds $e.d.(\mathrm{VMRT})$ are optimal:
    \begin{equation*}
        \begin{aligned}
             &Q^n(n>6),~ B_n/P_k(\frac{2n}{3}<k<n-2), ~C_n/P_k(k<n),\\
             &D_n/P_k(\frac{2n-2}{3}<k<n-3),~E_n/P_1, E_n/P_n(n=6,7,8),~F_4/P_k(k=3,4).
        \end{aligned}
    \end{equation*}

    For the following generalised Grassmannians, the bounds $e.d.(\mathrm{VMRT})$ are optimal while the classification of unsplit uniform bundles of minimal ranks is unknown.
    \begin{equation*}
        \begin{aligned}
             B_n/P_k(k=\frac{2n}{3},n \ne  3,6),~C_n/P_n,~ ~D_n/P_k(k=\frac{2n-2}{3},n\ne 4,7),~  F_4/P_1.
        \end{aligned}
    \end{equation*}
    \\
    
    In one forthcoming paper of Xinyi Fang, she has given the classification of unsplit uniform bundles of minimal ranks on the variety $G_2/P_2$.
\end{remark}

\textbf{Funding:}
Xinyi Fang is supported by innovation Action Plan (Basic research projects) of Science and Technology Commission of Shanghai Municipality (Grant No.21JC1401900).
Duo Li is supported by National Natural Science Foundation of China (Grant No. 12001547).\\

\textbf{Acknowledgements:} The authors would like to thank Baohua Fu and Rong Du for introducing this problem to us and for their very helpful discussions during the last several months. We also thank Peng Ren for  valuable discussions and suggestions. The authors are  grateful to Gianluca Occhetta for his generosity of sharing ideas with us. His  suggestions improve the first version of this article hugely.
\bibliography{ref}

\begin{thebibliography}{10}

\bibitem{biswas2009vector}
Indranil Biswas and Jo\~{a}o Pedro~P. dos Santos.
\newblock On the vector bundles over rationally connected varieties.
\newblock {\em C. R. Math. Acad. Sci. Paris}, 347(19-20):1173--1176, 2009.

\bibitem{lieGroupsandLiealgebras4-6}
Nicolas Bourbaki.
\newblock {\em Lie groups and {L}ie algebras. {C}hapters 4--6}.
\newblock Elements of Mathematics (Berlin). Springer-Verlag, Berlin, 2002.
\newblock Translated from the 1968 French original by Andrew Pressley.

\bibitem{cowen1989automorphisms}
Michael~J Cowen.
\newblock Automorphisms of {G}rassmannians.
\newblock {\em Proceedings of the American Mathematical Society}, 106(1):99--106, 1989.

\bibitem{du2021vector}
Rong Du, Xinyi Fang, and Yun Gao.
\newblock Vector bundles on rational homogeneous spaces.
\newblock {\em Ann. Mat. Pura Appl. (4)}, 200(6):2797--2827, 2021.

\bibitem{MR4553615}
Rong Du, Xinyi Fang, and Yun Gao.
\newblock Vector bundles on flag varieties.
\newblock {\em Math. Nachr.}, 296(2):630--649, 2023.

\bibitem{EHS}
G.~Elencwajg, A.~Hirschowitz, and M.~Schneider.
\newblock Les fibres uniformes de rang au plus {$n$} sur {${\bf P}\sb{n}({\bf C})$} sont ceux qu'on croit.
\newblock In {\em Vector bundles and differential equations ({P}roc. {C}onf., {N}ice, 1979)}, volume~7 of {\em Progr. Math.}, pages 37--63. Birkh\"{a}user, Boston, MA, 1980.

\bibitem{fang2023morphisms}
Xinyi Fang and Peng Ren.
\newblock Morphisms from $\mathbb{P}^m$ to flag varieties.
\newblock {\em arXiv preprint arXiv:2310.08151}, 2023.

\bibitem{fritzsche1983linear}
Klaus Fritzsche.
\newblock Linear-uniforme {B}\"{u}ndel auf {Q}uadriken.
\newblock {\em Ann. Scuola Norm. Sup. Pisa Cl. Sci. (4)}, 10(2):313--339, 1983.

\bibitem{guyot1985caracterisation}
M.~Guyot.
\newblock Caract\'{e}risation par l'uniformit\'{e} des fibr\'{e}s universels sur la {G}rassmanienne.
\newblock {\em Math. Ann.}, 270(1):47--62, 1985.

\bibitem{MR2785842}
Jaehyun Hong and Ngaiming Mok.
\newblock Analytic continuation of holomorphic maps respecting varieties of minimal rational tangents and applications to rational homogeneous manifolds.
\newblock {\em J. Differential Geom.}, 86(3):539--567, 2010.

\bibitem{hu2023effective}
Haoqiang Hu, Changzheng Li, and Zhaoyang Liu.
\newblock Effective good divisibility of rational homogeneous varieties.
\newblock {\em Math. Z.}, 305(3):Paper No. 52, 23, 2023.

\bibitem{kumar2023nonexistence}
Shrawan Kumar.
\newblock Nonexistence of regular maps between homogeneous projective varieties.
\newblock {\em arXiv preprint arXiv:2307.07018}, 2023.

\bibitem{landsberg2003projective}
Joseph~M Landsberg and Laurent Manivel.
\newblock On the projective geometry of rational homogeneous varieties.
\newblock {\em Commentarii Mathematici Helvetici}, 78:65--100, 2003.

\bibitem{munoz2012uniform}
Roberto Mu\~{n}oz, Gianluca Occhetta, and Luis~E. Sol{\'a}~Conde.
\newblock Uniform vector bundles on fano manifolds and applications.
\newblock {\em Journal f{\"u}r die reine und angewandte Mathematik}, 2012(664):141--162, 2012.

\bibitem{munoz2020splitting}
Roberto Mu\~{n}oz, Gianluca Occhetta, and Luis~E. Sol\'{a}~Conde.
\newblock Splitting conjectures for uniform flag bundles.
\newblock {\em Eur. J. Math.}, 6(2):430--452, 2020.

\bibitem{munoz2021uniform}
Roberto Mu\~{n}oz, Gianluca Occhetta, and Luis~E Sol{\'a}~Conde.
\newblock On uniform flag bundles on fano manifolds.
\newblock {\em Kyoto Journal of Mathematics}, 61(4):843--872, 2021.

\bibitem{https://doi.org/10.1002/mana.202300036}
Roberto Mu\~{n}oz, Gianluca Occhetta, and Luis~E. Sol\'{a}~Conde.
\newblock Maximal disjoint {S}chubert cycles in rational homogeneous varieties.
\newblock {\em Math. Nachr.}, 297(1):174--194, 2024.

\bibitem{SHN}
Nitin Nitsure.
\newblock Schematic {H}arder-{N}arasimhan stratification.
\newblock {\em Internat. J. Math.}, 22(10):1365--1373, 2011.

\bibitem{occhetta2023morphisms}
Gianluca Occhetta and Eugenia Tondelli.
\newblock Morphisms between {G}rassmannians, {I}{I}.
\newblock {\em Archiv der Mathematik}, pages 1--9, 2024.

\bibitem{ottaviani1988spinor}
Giorgio Ottaviani.
\newblock Spinor bundles on quadrics.
\newblock {\em Transactions of the American mathematical society}, 307(1):301--316, 1988.

\bibitem{ottaviani1995rational}
Giorgio Ottaviani.
\newblock Rational homogeneous varieties.
\newblock {\em Lecture notes for the summer school in Algebraic Geometry in Cortona}, 1995.

\bibitem{Pan2015Tri}
Xuanyu Pan.
\newblock Triviality and split of vector bundles on rationally connected varieties.
\newblock {\em Math. Res. Lett.}, 22(2):529--547, 2015.

\bibitem{sato1976uniform}
Ei-ichi Sato.
\newblock Uniform vector bundles on a projective space.
\newblock {\em Journal of the Mathematical Society of Japan}, 28(1):123--132, 1976.

\end{thebibliography}
\bibliographystyle{plain}

\end{document}